\newtheorem {prop}{Proposition}[section]
\newtheorem {lemme}[prop]{Lemma}
\newtheorem {theoreme}[prop]{Theorem}
\newtheorem {definition}[prop]{Definition}
\newtheorem{conjecture}[prop]{Conjecture}
\theoremstyle{definition}
\newtheorem{Examples}[prop]{Examples}
\newtheorem{Remark}[prop]{Remark}
\newcommand{\rk}{\operatorname{rk}}
\newcommand{\codim}{\operatorname{codim}}
\newcommand{\corank}{\operatorname{corank}}
\newcommand{\trd}{\mathrm{tr\, deg}}
\newcommand{\zset}{\mathbb Z}
\newcommand{\kset}{\Bbbk}
\newcommand{\ilie}{\mathfrak {i}}
\newcommand{\rlie}{\mathfrak {r}}
\newcommand{\blie}{\mathfrak {b}}
\newcommand{\glie}{\mathfrak {g}}
\newcommand{\hlie}{\mathfrak {h}}
\newcommand{\mlie}{\mathfrak {m}}
\newcommand{\ulie}{\mathfrak {u}}
\newcommand{\nlie}{\mathfrak {n}}
\newcommand{\qlie}{\mathfrak {q}}
\newcommand{\tlie}{\mathfrak {t}}
\newcommand{\alie}{\mathfrak {a}}
\newcommand{\calg}{{\mathcal G}}
\newcommand{\calh}{{\mathcal H}}
\newcommand{\cali}{{\mathcal I}}
\newcommand{\calz}{{\mathcal Z}}
\begin{document}

\title[On the index of the quotient]{On the index of the quotient of a Borel 
subalgebra by an ad-nilpotent ideal}

\author{C\'eline Righi}
\address{UMR 6086 du CNRS, D\'epartement de Math\'ematiques, Universit\'e de Poitiers, 
T\'el\'eport 2 - BP 30179, Boulevard Marie et Pierre Curie, 86962 Futuroscope Chasseneuil cedex, France.}
\email{Celine.Righi@math.univ-poitiers.fr}

\author{Rupert W.T. Yu}
\address{UMR 6086 du CNRS, D\'epartement de Math\'ematiques, Universit\'e de Poitiers, 
T\'el\'eport 2 - BP 30179, Boulevard Marie et Pierre Curie, 86962 Futuroscope Chasseneuil cedex, France.}
\email{yuyu@math.univ-poitiers.fr}

\begin{abstract}
In this paper, we give upper bounds for the index of the quotient
of the Borel subalgebra of a simple Lie algebra or its nilpotent radical 
by an ad-nilpotent ideal. 
For the nilpotent radical quotient, our bound is a generalization of the
formula for the index given by Panov in the type $A$ case. In general, this bound is not exact. 
Using results from Panov,  we show that the 
upper bound for the Borel quotient is exact in the type $A$ case, and 
we conjecture that it is exact in general.  
\vskip2em

\noindent{\sc R\'esum\'e.} 
Dans cet article, nous donnons des bornes sup\'erieures pour l'indice du
quotient d'une sous-alg\`ebre de Borel  d'une
alg\`ebre de Lie simple ou de son radical nilpotent par un id\'eal ad-nilpotent. 
Pour le quotient du radical nilpotent, notre borne sup\'erieure est une g\'en\'eralisation de la
formule obtenue pour l'indice par Panov  en type $A$. En g\'en\'eral,
cette borne n'est pas exacte. En utilisant des r\'esultats de Panov,
nous montrons qu'en ce qui concerne le quotient de la sous-alg\`ebre de
Borel, notre borne sup\'erieure est exacte dans le cas du type $A$, et
nous conjecturons que c'est aussi le cas en g\'en\'eral.
\end{abstract}
\maketitle

%---------------------------------------------------------------------------------------------------------
\section{Introduction}

The index of the quotient of a Borel subalgebra by an ad-nilpotent ideal is considered in recent 
works of P. Damianou, H. Sabourin and P. Vanhaecke \cite{DSV} on problems related to Toda-lattices 
and integrable systems. They associate to such an ad-nilpotent ideal an Hamiltonian system, and 
they want to determine whether this system is integrable. 

Let $\glie$ be a simple finite-dimensional Lie algebra over an algebraically closed field $\kset$
of characteristic zero, and $\blie$ a Borel subalgebra of $\glie$. 
For any ad-nilpotent ideal $\ilie$ of $\blie$, $(\blie/\ilie)^*$ is a Poisson submanifold of $\blie^*$. Its Poisson rank $L$ is equal to the dimension of $\blie/\ilie$ minus the index of $\blie/\ilie$. Since the number of equations required for the previous Hamiltonian system to be integrable is 
$\dim(\blie/\ilie)^{*}-L/2$, the calculation of the index of $\blie/\ilie$ is involved in this problem.

Recall that the index of a finite-dimensional Lie algebra $\alie$ over $\kset$
is the integer
$$
\chi (\alie)=\min_{f\in \alie^{*}} \dim \alie^{f}
$$
where for $f \in \mathfrak{g}^{*}$, we denote by
$\alie^{f}=\{ X\in \alie ; f([X,Y])= 0$ for all $Y \in \alie \}$,
the annihilator of $f$ for the coadjoint representation of $\alie$.
It is well-known that when $\alie$ is the Lie algebra of an algebraic group $A$,
$\chi (\alie )$ is the transcendence degree
of the field of $A$-invariant rational functions on $\alie^{*}$.

There are quite a lot of recent works on the computation of the index of certain classes of 
Lie subalgebras of a semisimple Lie algebra :
parabolic subalgebras and related subalgebras 
(\cite{DK}, \cite{Jos}, \cite{Panyseaweed}, \cite{TY2}, \cite{Mor2}), 
centralizers of elements and related subalgebras 
(\cite{Pany}, \cite{Char}, \cite{Yaki}, \cite{Mor1}).

Let $\hlie$ be a  Cartan subalgebra of the simple Lie algebra $\glie$ contained in $\blie$, 
$\Delta$ the associated root system, 
$\Delta^+$ the set of positive roots relative to $\blie$ and $\Pi=\{ \alpha_{1},\dots ,\alpha_{\ell} \}$ the 
corresponding set of simple roots. For each 
$\alpha \in \Delta$, let $ \mathfrak{g}^{ \alpha}$ be the root subspace of $\glie$ relative to $\alpha$.
Denote by $\nlie=\glie^{\Delta^+}$ the nilpotent radical of $\blie$
where for a subset $P$ of $\Delta^+$, we set
$$
\glie^{P} = \bigoplus_{\alpha \in P} \glie^{\alpha}.
$$

An ideal $\ilie $ of $\blie$ is ad-nilpotent if and only if for all $x\in \ilie$, $\mathrm{ad}_{\blie} x$ is 
nilpotent. Since any ideal of $\blie$ is $\hlie$-stable, we deduce easily that an ideal is ad-nilpotent 
if and only if it is nilpotent, and there exists a subset $\Phi\subset \Delta^+$ such that 
$\ilie =\glie^{\Phi}$. We set $\qlie_{\Phi}=\blie/ \ilie$ and $\mlie_{\Phi}=\nlie / \ilie$.

In \cite{Pa}, Panov determined the index of $\mlie_{\Phi}$, when
$\glie$ is simple of type $A$. His results are very explicit, and the index is completely
determined by $\Delta^+\setminus \Phi$ in a combinatorial way.  A similar
consideration of roots was used for the index of seaweed subalgebras in \cite{TY2}.
In this paper, we generalize these root combinatorial approaches to give upper bounds
for the index of $\qlie_{\Phi}$ and $\mlie_{\Phi}$ in all types.
Our upper bound for $\mlie_{\Phi}$ is not exact when $\glie$ is not of type $A$. 
However, using the results of Panov for $\mlie_{\Phi}$, we prove that our upper bound for 
$\qlie_{\Phi}$ is exact when $\glie$ is of type $A$, and we have not found so far any 
counter-examples in the other types. We give also a short discussion on the existence
of stable linear forms. 

\medskip
We shall recall a more general definition
of the index which is used in the paper. 
Let $\alie$ be the Lie algebra of an algebraic group $A$ and $V$
a rational $A$-module of finite dimension. The \textit{index} of $V$ is the integer 
$$
\begin{array}{rcl}
\chi(\alie, V) & = &
\dim V-\max\limits_{h\in V^*} \dim \alie.h =
\dim V-\max\limits_{h\in V^*} \codim_{\alie} \alie^h\\
& =& \trd_{\kset}(\kset(V^*)^A)
\end{array}
$$
where for $f\in V^*$, $\alie^f=\{X\in\alie;X.f=0\}$ and $\alie.f=\{X.f;X\in\alie\}$.
When $f\in V^*$ is such that 
$
\dim V-\dim \alie.f=\chi(\alie, V),
$
we say that $f$ is \textit{regular}. The set of regular elements of $V^*$ is a non-empty 
Zariski-open subset.

\section{H-sequences}\label{sec:H}

In this section, we introduce the combinatorial tools used to describe the upper bounds for the 
index of the quotients. This is a generalization of  the ``cascade'' construction of Kostant
(see for example \cite{TY2,TYL}) and the construction of Panov in type $A$ in \cite{Pa}. 

Recall the following standard partial order on $\Delta^+$. For $\alpha,\beta\in \Delta^+$, we have $\alpha \leqslant \beta$ if and only if $\beta-\alpha$ is a sum of positive roots. Let $E\subset \Delta^+$ and $\gamma\in E$. We set :
$$
H(E,\gamma)=\{\alpha\in E; \gamma-\alpha\in E\cup\{0\}\}.
$$
\begin{definition}
Let $E\subset \Delta^+$  and $\theta_1\in E$. We say that $(\theta_1)$ is an 
\emph{H-sequence of length $1$} in $E$ if $E=H(E, \theta_1)$.

By induction, for $\theta_1,\theta_2,\dots,\theta_r \in E$, we say that $(\theta_1,\theta_2,\dots,\theta_r)$ is an \emph{H-sequence of length} $r$ in $E$ if and only if :
\begin{enumerate}
\item[(i)] $(\theta_2,\theta_3,\dots,\theta_r)$ is an H-sequence of length $r-1$ in 
$E\setminus H(E, \theta_1)$,
\item[(ii)] $\theta_1$ is a maximal element for $\leqslant$ in $E$.
\end{enumerate}
\end{definition}

Let $(\theta_1,\theta_2,\dots,\theta_r)$ be an H-sequence of length $r$ in $E$. Set :
$$
E_1=E\setminus H(E, \theta_1)\quad,\quad \Gamma_1= H(E, \theta_1).
$$
For $i=1,\dots,r-1$, we set
$$
E_{i+1}=E_i\setminus H(E_i, \theta_{i+1})\quad,\quad \Gamma_{i+1}= H(E_i, \theta_{i+1}).
$$ 
It is clear from the definition that $E$ is the disjoint union of $\Gamma_1,\dots,\Gamma_r$, and we have $E_i=E_{i+1}\cup \Gamma_{i+1}$ for $i=0,\dots,r-1$, with the convention that $E_0=E$.

Let $\mathbf{h}$ be an H-sequence. We denote by $\ell(\mathbf{h})$ its length, $D(\mathbf{h})$ the vector space in $\hlie^*$ spanned by the elements in $\mathbf{h}$, and 
$d(\mathbf{h})=\dim D(\mathbf{h})$. 

\begin{Examples}\label{example1}

(i) Let $E=\Delta^+$. We recover (an ordered) Kostant's cascade construction of pairwise strongly orthogonal roots in $\Delta^+$. 

(ii) Let $\glie$ be of type $A_6$. Using the numbering of simple roots in \cite{TYL}, set $\alpha_{i,j}=\alpha_i+\dots+\alpha_j$. Take 
%$\cali =\{\alpha_{1,4},\alpha_{2,6}\}$, and 
$\Phi=\{\alpha\in\Delta^+; \alpha \geqslant \alpha_{1,4} \mbox{ or } \alpha \geqslant \alpha_{2,6}\}$, 
and $E=\Delta^+\setminus\Phi$. Then $\mathbf{h}=\{\alpha_{1,3},\alpha_{2,5},\alpha_{3,6},\alpha_{4,6},\alpha_{4,4}\}$ is an H-sequence of length $5$ in $E$, where 
$$
\begin{array}{ll}
\Gamma_1=\{\alpha_{1,3},\alpha_{1,1},\alpha_{2,3}, \alpha_{1,2},\alpha_{3,3}\},& \Gamma_4=\{\alpha_{4,6},\alpha_{4,5},\alpha_{6,6}\}, \\
\Gamma_2=\{\alpha_{2,5},\alpha_{2,2},\alpha_{3,5},\alpha_{2,4},\alpha_{5,5}\}, & \Gamma_5=\{\alpha_{4,4}\},\\
\Gamma_3=\{\alpha_{3,6},\alpha_{3,4},\alpha_{5,6}\}. \\
\end{array}
$$

We  have another H-sequence $\mathbf{h}'=(\alpha_{2,5},\alpha_{3,6},\alpha_{4,4},\alpha_{6,6},\alpha_{1,3},\alpha_{1,2},\alpha_{1,1}).$ which is of length $7$. Observe that $d(\mathbf{h})=5$ and $d(\mathbf{h}')=6$.
\end{Examples}

\noindent
\begin{lemme}\label{prop_sequence}
Let $E\subset\Delta^+$ and $\mathbf{h}=(\theta_1,\dots,\theta_r)$ be an H-sequence of length $r$ in $E$. Let $i,j, k \in \{1,\dots,r\}$.
\begin{enumerate}[(i)]
\item Let $\alpha\in \Gamma_i$ and $\beta\in \Gamma_j$ be such that $\alpha +\beta=\theta_k$. Then $k \geqslant \min(i,j)$.
\item There do not exist $i,j,k$ such that $\theta_i+\theta_j=\theta_k$.
\end{enumerate}
\end{lemme}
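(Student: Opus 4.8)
The plan is to establish (i) first and then deduce (ii) from it. Throughout I will freely use the two structural observations recorded just before the statement: the refined decomposition $E_{m-1}=\Gamma_m\sqcup\Gamma_{m+1}\sqcup\cdots\sqcup\Gamma_r$ (with $E_0=E$), in which the pieces are pairwise disjoint, and the defining formula $\Gamma_m=H(E_{m-1},\theta_m)=\{\gamma\in E_{m-1};\theta_m-\gamma\in E_{m-1}\cup\{0\}\}$. For (ii) I will also use that $\theta_m$ is a maximal element of $E_{m-1}$ for $\leqslant$: this is condition (ii) of the definition of an H-sequence applied to the sub-H-sequence $(\theta_m,\dots,\theta_r)$ in $E_{m-1}$, the case $m=r$ being the length-$1$ condition $E_{r-1}=H(E_{r-1},\theta_r)$, which already forces $\theta_r$ to be the maximum of $E_{r-1}$.

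For (i), the hypothesis $\alpha+\beta=\theta_k$ is symmetric in $\alpha$ and $\beta$, so I may assume $i\leqslant j$ and must show $k\geqslant i$. Suppose instead $k<i$. Then $i\geqslant k$ and $j\geqslant i\geqslant k$, so $\Gamma_i$ and $\Gamma_j$ both occur among the pieces of $E_{k-1}$; in particular $\alpha,\beta\in E_{k-1}$. Since $\theta_k-\alpha=\beta\in E_{k-1}$, the definition of $H(E_{k-1},\theta_k)$ gives $\alpha\in H(E_{k-1},\theta_k)=\Gamma_k$. But $\alpha\in\Gamma_i$ with $i\neq k$, contradicting the disjointness of the $\Gamma$'s. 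Hence $k\geqslant i=\min(i,j)$.

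For (ii), suppose $\theta_i+\theta_j=\theta_k$, and again assume $i\leqslant j$. Since $\theta_j\in\Delta^+$, the relation $\theta_k-\theta_i=\theta_j$ shows $\theta_i<\theta_k$, so in particular $i\neq k$. Applying (i) with $\alpha=\theta_i\in\Gamma_i$ and $\beta=\theta_j\in\Gamma_j$ yields $k\geqslant i$, whence $k>i$. Then $\Gamma_k$ is one of the pieces of $E_{i-1}$, so $\theta_k\in E_{i-1}$; but $\theta_k>\theta_i$ contradicts the maximality of $\theta_i$ in $E_{i-1}$, proving (ii).

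There is no serious obstacle here; the only point deserving care is the bookkeeping with the nested sets $E_0\supseteq E_1\supseteq\cdots$. In particular, in (ii) the correct contradiction is with the maximality of $\theta_i$ in $E_{i-1}$ — which is legitimate precisely because $\theta_k\in E_{i-1}$ once $k>i$ — and not with any property of $\theta_k$, since $\theta_i$ itself need not lie in $E_{k-1}$ when $k>i$.
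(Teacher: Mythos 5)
Your proof is correct and follows essentially the same route as the paper's: part (i) via the disjointness of the $\Gamma_m$'s together with the definition of $H(E_{k-1},\theta_k)$, and part (ii) by combining (i) with the maximality of $\theta_i$ in $E_{i-1}$ (the paper merely runs the contradiction in the opposite order, deducing $k<\min(i,j)$ from maximality and then invoking (i)). Your extra remarks — that the length-$1$ condition already forces maximality, and the careful tracking of which $E_m$ contains which roots — are sound and only make explicit what the paper leaves implicit.
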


\begin{proof}
(i) If $k<\min(i,j)$, then $\alpha,\beta\in E_k$. It follows that $\alpha,\beta\in\Gamma_k$ and $k=j=i$, which contradicts the hypothesis. 

(ii) Assume that there  exist $i,j,k$ such that $\theta_i+\theta_j=\theta_k$. Then $\theta_k > \theta_i$ and $\theta_k > \theta_j$ and therefore by construction $k<\min(i,j)$, which contradicts the first point.
\end{proof}

%-----------------------------------------------------------------------------------------------------------------------
\section{Upper bounds for the index}

We give in this section upper bounds for the index of the quotients. The proof follows
closely to the one for the index of seaweed Lie algebras in \cite{TY2} even though 
we do not have the nice properties on the roots from the ``cascade construction''. 

Recall that if $\alie$ is a finite-dimensional Lie algebra over $\kset$ and
$f \in\alie^{*}$, we can define an alternating bilinear form $\Phi_{f}$ on $\alie$ by setting 
$$
\Phi_{f}(X,Y) = f([X,Y]),
$$
for $X,Y \in \alie$. Then $\alie^{f}=\{X\in\alie; \Phi_f(X,Y)=0, \mbox{ for all } Y\in\alie\}$ is the kernel of 
$\Phi_{f}$. Therefore we have
$$
\chi (\alie) = \min \{ \corank \Phi_f; f \in\alie^{*} \}.
$$

Let $\{ H_{1}, \dots , H_{\ell}\}$ be a basis of $\mathfrak{h}$. For $\alpha\in\Delta$, we denote by $X_{\alpha}$ a non-zero element of $\mathfrak{g}^{ \alpha}$. Then $\{ H_{i}; 1 \leqslant i \leqslant
\ell\} \cup \{ X_{\alpha}; \alpha \in \Delta\}$ is a basis of $\mathfrak{g}$ and we shall denote by $\{ H_{i}^{*}; 1 \leqslant i
\leqslant \ell\} \cup \{ X_{\alpha}^{*}; \alpha \in \Delta\}$ the corresponding dual basis.

Let $\Phi$ be a subset of $\Delta^+$ such that $\ilie=\mathfrak{g}^{ \Phi}$ is an ad-nilpotent ideal of $\blie$. Suppose that $\mathbf{h}=(\theta_1, \dots, \theta_s)$ is an H-sequence of length $s$ of $E=\Delta^+\setminus \Phi$. We have the following $\hlie$-module isomorphisms :
$$
\qlie_{\Phi}\simeq\hlie \oplus \glie^{\Delta^+\setminus \Phi}\quad,\quad
\mlie_{\Phi}\simeq\glie^{\Delta^+\setminus \Phi}.
$$

Let $\mathbf{a}=(a_1,\dots,a_s)$ be an element of $(\kset^*)^s$. Identifying $\qlie_{\Phi}^*$ with $\hlie^*\oplus \displaystyle\sum_{\alpha\in\Delta^{+}\setminus \Phi} \kset X_ {\alpha}^*$, 
we define the following element of $\qlie_{\Phi}^*$ :
$$
f_{\mathbf{a}}=\sum_{i=1}^s a_i X_{\theta_i}^*.% \in \qlie_{\Phi}^*.
$$

We fix a total order $<$ on $\Delta^+$ compatible with the partial order $\leqslant$. 

For $i \in\{1,\dots,s\}$, set 
$$
\calg_i=\{(\alpha, \beta)\in \Gamma_i\times\Gamma_i;\alpha+\beta=\theta_i \mbox{ and } \alpha<\beta\}\quad,\quad t_i=\sharp\calg_i \quad, 
$$
and
$$
\calg =\bigcup_{i=1}^s \calg_i \quad,\quad t= \sharp\calg.
$$
Denote by $\calz$  the set of pairs $(\alpha, \beta)$ of $E^2$ such that $\alpha<\beta$ and there exists $k\in\{1,\dots,s\}$ satisfying $\alpha+\beta=\theta_k$.

\medskip
For $z=(\alpha,\beta)\in\calz$, we set 
$$
v_z=X^*_{\alpha}\wedge X^*_{\beta}\in \bigwedge\nolimits^2 \qlie_{\Phi}^* .
$$
Identifying $\Phi_{f_{\mathbf{a}}}$ with an element of $\bigwedge^2 \qlie_{\Phi}^*$, we have
$$
\Phi_{f_{\mathbf{a}}}=\Psi_{f_{\mathbf{a}}}+ \Theta_{f_{\mathbf{a}}}
$$
where 
$$
\Theta_{f_{\mathbf{a}}} =\sum_{i=1}^s K_i\wedge X_{\theta_i}^* \quad,\quad \Psi_{f_{\mathbf{a}}}=\sum_{z\in\calz} \lambda_z v_z \quad,
$$
with $K_i\in D(\mathbf{h})$ for $i=1,\dots,s$ and $\lambda_z\in \kset$ for all $z\in\calz$.

If $z = (\alpha , \beta ) \in \mathcal{Z}$, then $\Theta_{f_{\mathbf{a}}}(X_{\alpha}, X_{\beta}) = 0$. Moreover, we have $[X_{\alpha}, X_{\beta}] = \mu_{z}X_{\theta_i}$, for some $i\in\{1,\dots,s\}$ and a non-zero scalar $\mu_{z}$. Consequently,
\begin{equation}\label{eq1}
\lambda_{z} =\Phi_{f_{\mathbf{a}}} (X_{\alpha},X_{\beta}) = f_{\mathbf{a}}([X_{\alpha}, X_{\beta}]) = \mu_{z} a_{i}. 
\end{equation}
Thus $\lambda_{z}$ is non-zero.

\begin{lemme}\label{lemma1} In the above notations :
\begin{enumerate}[(i)]
\item $\qlie_{\Phi}^{f_{\mathbf{a}}}$ contains a commutative subalgebra of $\qlie_{\Phi}$, 
consisting of semi-simple elements and of dimension $
\ell-d(\mathbf{h}).$
\item We have $\bigwedge^{d(\mathbf{h})} \Theta_{f_{\mathbf{a}}} \ne 0$ and $\bigwedge^{d(\mathbf{h})+1}
\Theta_{f_{\mathbf{a}}} = 0$. 

\item There exists a non-empty open subset $U$ of $(\kset^*)^{s}$ such that we have 
$\bigwedge^{t} \Psi_{f_{\mathbf{a}}} \ne 0$ and $\bigwedge^{d(\mathbf{h})+t} \Phi_{f_{\mathbf{a}}} \ne
0$ whenever $\mathbf{a} \in U$.
\end{enumerate}
\end{lemme}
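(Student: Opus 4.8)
The plan is to prove the three statements essentially in order, as they build on each other, with part (iii) being the only one requiring a genericity (open-density) argument.

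For part (i), the idea is to exhibit the commutative subalgebra explicitly inside $\hlie$. Since $D(\mathbf{h})\subset\hlie^*$ has dimension $d(\mathbf{h})$, its annihilator $\mathfrak{c}=\{H\in\hlie : \alpha(H)=0 \text{ for all } \alpha\in\mathbf{h}\}$ has dimension $\ell-d(\mathbf{h})$. It is commutative (it sits in the Cartan) and consists of semisimple elements. I would check that $\mathfrak{c}\subset\qlie_\Phi^{f_{\mathbf{a}}}$ via the formula $\Phi_{f_{\mathbf{a}}}(H,Y)=f_{\mathbf{a}}([H,Y])$: for $H\in\mathfrak{c}$ and $Y$ a basis element, $[H,Y]$ is zero if $Y\in\hlie$, and equals $\alpha(H)X_\alpha$ if $Y=X_\alpha$; since $f_{\mathbf{a}}$ is supported on the $X_{\theta_i}^*$ and $\theta_i\in\mathbf{h}$, the pairing vanishes because $\theta_i(H)=0$. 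Hence $H\in\qlie_\Phi^{f_{\mathbf{a}}}$.

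For part (ii), the key observation is that $\Theta_{f_{\mathbf{a}}}=\sum_{i=1}^s K_i\wedge X_{\theta_i}^*$, where each $K_i\in D(\mathbf{h})\subset\hlie^*$, while the $X_{\theta_i}^*$ are linearly independent covectors transverse to $\hlie^*$. Writing $\mathbf{h}$'s span $D(\mathbf{h})$ with a basis of size $d(\mathbf{h})$, one should first argue that $\sum_i K_i\wedge X_{\theta_i}^*$ can be rewritten, after a change of basis among the $K_i$'s using Lemma~\ref{prop_sequence}(ii) (which guarantees no $\theta_k$ is a sum $\theta_i+\theta_j$, so the bilinear form is "triangular" in the $\theta$-grading), as a sum of $d(\mathbf{h})$ wedges of the form $\xi_m\wedge X_{\theta_{i_m}}^*$ with the $\xi_m$ spanning $D(\mathbf{h})$ and the $\theta_{i_m}$ distinct. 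Then $\bigwedge^{d(\mathbf{h})}\Theta_{f_{\mathbf{a}}}$ is, up to a nonzero scalar, $\xi_1\wedge X_{\theta_{i_1}}^*\wedge\cdots\wedge\xi_{d(\mathbf{h})}\wedge X_{\theta_{i_{d(\mathbf{h})}}}^*\neq 0$ since all these covectors are linearly independent; and $\bigwedge^{d(\mathbf{h})+1}\Theta_{f_{\mathbf{a}}}=0$ because $\Theta_{f_{\mathbf{a}}}$, as a 2-form, has the $d(\mathbf{h})$-dimensional space spanned by the $\xi_m$ in one "slot", so its rank as an alternating form is at most $2d(\mathbf{h})$ but more precisely it is supported on a subspace structure of rank exactly $d(\mathbf{h})$ in the relevant sense — here I would make precise that $\Theta_{f_{\mathbf{a}}}$ lies in $D(\mathbf{h})\wedge\langle X_{\theta_i}^*\rangle$, and any element of $W\wedge V$ with $\dim W=d$ has vanishing $(d+1)$-st exterior power.

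For part (iii), I would proceed as follows. The form $\Psi_{f_{\mathbf{a}}}=\sum_{z\in\calz}\lambda_z v_z$ has, by \eqref{eq1}, coefficients $\lambda_z=\mu_z a_i$ that are polynomial (in fact linear) in $\mathbf{a}$ and generically nonzero. The assertion $\bigwedge^t\Psi_{f_{\mathbf{a}}}\neq 0$ is a polynomial condition on $\mathbf{a}$; to see the locus is nonempty and open it suffices to exhibit one $\mathbf{a}$ where it holds. Here the decisive input is Lemma~\ref{prop_sequence}(i): for a pair $z=(\alpha,\beta)\in\calg_i$ (so $\alpha,\beta\in\Gamma_i$, $\alpha+\beta=\theta_i$), the roots $\alpha,\beta$ do not appear in any other "fresh" pair of the same block in a way that would create linear dependence — more carefully, the pairs in $\calg$ built from the blocks $\Gamma_i$ are pairwise "disjoint enough" (the $t_i$ pairs in $\calg_i$ involve roots summing to $\theta_i$, and across blocks there are no coincidences) so that the $t$ bivectors $\{v_z : z\in\calg\}$ span a $t$-dimensional isotropic-type piece making $\bigwedge^t$ of their sum nonzero; one then checks $\calg$ is exactly the "diagonal" subset of $\calz$ whose bivectors are independent in the required sense, so choosing $\mathbf{a}$ generic makes $\bigwedge^t\Psi_{f_{\mathbf{a}}}\neq 0$. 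Finally, $\bigwedge^{d(\mathbf{h})+t}\Phi_{f_{\mathbf{a}}} = \bigwedge^{d(\mathbf{h})+t}(\Theta_{f_{\mathbf{a}}}+\Psi_{f_{\mathbf{a}}})$ expands via the binomial formula in $\bigwedge^\bullet$; the cross term $\bigwedge^{d(\mathbf{h})}\Theta_{f_{\mathbf{a}}}\wedge\bigwedge^t\Psi_{f_{\mathbf{a}}}$ is nonzero by parts (ii) and the previous step, because $\Theta_{f_{\mathbf{a}}}$ involves the covectors $X_{\theta_i}^*$ together with $D(\mathbf{h})\subset\hlie^*$, while $\Psi_{f_{\mathbf{a}}}$ involves only covectors $X_\alpha^*$ with $\alpha\notin\mathbf{h}$ — so no cancellation can occur among the distinct top-degree monomials, and intersecting the two open conditions from part (ii) and the $\Psi$-step gives the desired nonempty open $U\subset(\kset^*)^s$.

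The main obstacle is the combinatorial independence argument in part (iii): one must show that the bivectors $v_z$ for $z\in\calg$ genuinely contribute a nonzero $t$-fold wedge, which requires that no root $\alpha\in\Delta^+\setminus\Phi$ appears in "too many" of the pairs of $\calg$ simultaneously in a degenerate configuration, and that the block structure from the H-sequence keeps the contributions from different $\Gamma_i$ in different graded pieces — this is exactly where Lemma~\ref{prop_sequence} is used, and where the argument departs from the cleaner cascade situation of \cite{TY2}.
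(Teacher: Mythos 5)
Parts (i) and (ii) of your proposal are essentially the paper's argument: the commutative subalgebra is the orthogonal $\tlie$ of $D(\mathbf{h})$ in $\hlie$, and the statement about $\Theta_{f_{\mathbf{a}}}$ follows from the fact that $\Theta_{f_{\mathbf{a}}}$ lies in $D(\mathbf{h})\wedge\ulie^*$ with $\dim D(\mathbf{h})=d(\mathbf{h})$ (the paper does this by completing $(\theta_{i_1},\dots,\theta_{i_r})$ to a basis of $\hlie^*$ and computing in the dual basis; your appeal to Lemma~\ref{prop_sequence}(ii) at this point is not needed). One inaccuracy worth flagging: $\Psi_{f_{\mathbf{a}}}$ may well involve covectors $X^*_{\theta_j}$, since $\calz$ can contain pairs $(\theta_j,\beta)$ with $\theta_j+\beta=\theta_k$; the correct reason why only the term $k=t$ survives in the binomial expansion of $\bigwedge^{d(\mathbf{h})+t}\Phi_{f_{\mathbf{a}}}$ is the grading by the number of factors from $D(\mathbf{h})$, namely $\bigwedge^{j}\Theta_{f_{\mathbf{a}}}\in(\bigwedge^{j}D(\mathbf{h}))\wedge(\bigwedge^{j}\ulie^*)$, not a disjointness of supports between $\Psi$ and $\Theta$.

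The genuine gap is in the claim $\bigwedge^{t}\Psi_{f_{\mathbf{a}}}\neq 0$. The monomial $v_{1}\wedge\cdots\wedge v_{t}$ indexed by $\calg$ is automatically nonzero (the supports $\widetilde{z}$, $z\in\calg$, are pairwise disjoint), so the difficulty is not the independence of the bivectors $v_z$ for $z\in\calg$; it is that other $t$-element subsets of $\calz$ containing pairs from $\calz\setminus\calg$ can have the same total support $\mathcal{S}$ and therefore contribute to the coefficient of the \emph{same} basis monomial $v_1\wedge\cdots\wedge v_t$, with possible cancellation. Your proposal names this as ``the main obstacle'' but then only asserts that ``one then checks'' that $\calg$ is the right subset; no argument is given, and the claim that the block structure keeps contributions in different graded pieces is false as stated, since a pair $(\alpha,\beta)\in\calz\setminus\calg$ with $\alpha\in\Gamma_i\setminus\{\theta_i\}$ and $\beta\in\Gamma_j\setminus\{\theta_j\}$, $i\ne j$, can have both members inside $\mathcal{S}$. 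The paper closes this gap by a degree argument in the variables $a_1,\dots,a_s$: using Lemma~\ref{prop_sequence}(i) and choosing $i_0$ minimal such that $\Gamma_{i_0}\setminus\{\theta_{i_0}\}$ meets the support of some competing pair not in $\calg$, it shows that every competing subset contributes a monomial $\prod_i a_i^{m_i}$ with $m_{i_0}<t_{i_0}$, so the monomial $\prod_i a_i^{t_i}$ occurs exactly once, with coefficient $t!\prod_{z\in\calg}\mu_z\neq 0$; hence the coefficient of $v_1\wedge\cdots\wedge v_t$ is a nonzero polynomial in $\mathbf{a}$, and $U$ can be taken inside its nonvanishing locus. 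Without this (or an equivalent) cancellation analysis, part (iii) is not proved.
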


\begin{proof} The proof is similar to the one for the Lemme in \cite[\S 3.9]{TY2}.

(i) For simplicity, we write $\qlie=\qlie_{\Phi}$. Let $\tlie=\{x\in\hlie;\theta_i(x)=0 \mbox{ for } i=1,\dots, s\}$ be the orthogonal of $D(\mathbf{h})$ in $\hlie$. Then :
$$
\dim \tlie=\dim \hlie - \dim D(\mathbf{h}) = \ell - d(\mathbf{h}).
$$
We also have that $\displaystyle [\tlie,\qlie]\subset \bigoplus_{\alpha\in E\setminus \{\theta_1,\dots,\theta_s\}}\glie^{\alpha}$. It follows that $\tlie$ is contained in $\qlie^{f_{\mathbf{a}}}$, and therefore, we obtain the result.

\medskip
(ii) Set $r=d(\mathbf{h})$. Let $\cali=\{ i_{1},\dots ,i_{r}\} \subset\{1,\dots,s\}$ be such that 
$(\theta_{i_{1}}, \dots ,\theta_{i_{r}})$ is a basis of $D(\mathbf{h})$ and complete to a basis 
$\mathcal{B}' = (\beta_{1}, \dots ,\beta_{\ell})$ of $\mathfrak{h}^{*}$ such that $\beta_k=\theta_{i_{k}}$ 
for $k=1,\dots ,r$.
Denote by $\mathcal{B} = (h_{1},\dots , h_{\ell})$ the basis of $\mathfrak{h}$ dual to $\mathcal{B}'$. 
Then we have,
$$
f_{\mathbf{a}}([h_k ,X_{\theta_j}])=\left\{
\begin{array}{ll}
a_{i_{k}} &\mbox{if } j=i_{k}, \\
0 &\mbox{otherwise}.\\
\end{array}
\right.
$$
It follows that 
$$
\Theta_{f_{\mathbf{a}}}=\sum_{i\in\cali} a_{i} \theta_{i}\wedge X_{\theta_{i}}^*
+\sum_{j\not\in\cali} K_j\wedge X_{\theta_{j}}^*,
$$
where $K_j\in D(\mathbf{h})$. The result follows easily beacause $\sharp\cali=d(\mathbf{h})$.

\medskip
(iii) If $z,z' \in \mathcal{Z}$, we have $v_{z} \wedge v_{z'} = v_{z'} \wedge v_{z}$ and $v_{z}
\wedge v_{z} = 0$. Let $z_{1}, \dots , z_{n}$ be the elements of $\mathcal{Z}$ such that $z_{1}, \dots , z_{t}$ are the elements of $\calg$. For simplicity, let us write $\lambda_{i} v_{i}$ for $\lambda_{z_{i}} v_{z_{i}}$ and $\mu_{i}$ for $\mu_{z_{i}}$. Consequently:
$$
\bigwedge\nolimits^{t} \Psi_{f_{\mathbf{a}}} = t! \operatornamewithlimits{\textstyle \sum}_{1
\leqslant i_{1} < \cdots < i_{t} \leqslant n} \lambda_{i_{1}} \cdots
\lambda_{i_{t}} v_{i_{1}} \wedge \cdots \wedge v_{i_{t}}. 
$$

In the previous sum, the coefficient of $v_{1} \wedge
\cdots \wedge v_{t}$ is by \eqref{eq1}
$$
\operatornamewithlimits{\textstyle \prod}_{i=1}^s
a_{i}^{t_{i}} \Big(\operatornamewithlimits{\textstyle \prod}_{z \in\calg} \mu_{z}\Big).
$$

Now assume that $v_{i_{1}} \wedge \cdots \wedge v_{i_{t}} = \lambda
v_{1} \wedge \cdots \wedge v_{t}$, with $\lambda \in \kset^*$, where $i_{1} < \cdots < i_{t}$ and $(i_{1}, \dots , i_{t})
\ne (1, \dots , t)$. 

If $z=(\alpha,\beta)\in\calz$, we denote by $\widetilde{z}=\{\alpha,\beta\}$ the underlying set of $z$. Then the set $\mathcal{S} = \widetilde{z}_{1} \cup
\cdots \cup \widetilde{z_{t}}$ is the disjoint union of the sets $\widetilde{z_{i_{k}}}$ for $1 \leqslant k \leqslant t$. It follows that if $z_{i_{k}} \notin \calg$, then we have  by Lemma \ref{prop_sequence} that there exist $i,j\in\{1,\dots,s\}$ such that $i\not=j$ and $z_{i_{k}}=(\alpha,\beta)$ where $(\alpha,\beta)\in(\Gamma_i\setminus\{\theta_i\})\times(\Gamma_j\setminus\{\theta_j\})$. 

Let $\cali=\{ k; z_{i_k}\not\in\calg\}$. Let $i_0$ be minimal among the elements $j\in\{1,\dots,s\}$ 
verifying :
$$
\left(\Gamma_{j}\setminus\{\theta_j\}\right) \cap \left(\bigcup_{k\in\cali} \widetilde{z_{i_k}}\right) \ne \emptyset .
$$ 
Then there exists $\alpha\in\Gamma_{i_0}$, $k\in\cali$ and $\beta\in\Delta^+$ such that $\widetilde{z_{i_k}}=\{\alpha,\beta\}$. By our choice of $i_0$ and since $z_{i_k}\not\in\calg$, there exists $j\geqslant i_0$ and $l\in\{1,\dots,s\}$ such that $\beta\in\Gamma_j$ and $\alpha+\beta=\theta_l$. Then, by Lemma \ref{prop_sequence}, we have $l>\min(i_0,j)=i_0$.  It follows that $\lambda_{z_{i_k}} = \mu_{z_{i_k}}a_{l}$, where $l\ne i_0$.

We deduce that the coefficient of  $v_{i_{1}} \wedge \cdots
\wedge v_{i_{t}}$ in the sum giving $\bigwedge^{t} \Psi_{f_{\mathbf{a}}}$ is of the form 
$$
\mu_{i_{1}} \cdots \mu_{i_{t}} \operatornamewithlimits{\textstyle
\prod}_{i=1}^s a_{i}^{m_{i}},
$$
with $m_{i_{0}} < t_{i_{0}}$. 

It is now clear that there exists a non-empty open subset $U$ of $(\kset^*)^{s}$ verifying $\bigwedge^{t} \Psi_{f_{\mathbf{a}}} \ne 0$
if $\mathbf{a} \in U$. 

Finally, we have 
$$
\bigwedge\nolimits^{r+t} \Phi_{f_{\mathbf{a}}} = \operatornamewithlimits{\textstyle
\sum}_{k=0}^{r+t} \begin{pmatrix}
r+t \\
k\end{pmatrix} \left(\bigwedge\nolimits^{k} \Psi_{f_{\mathbf{a}}} \right) \wedge \left(\bigwedge\nolimits^{r+t-k}
\Theta_{f_{\mathbf{a}}}\right). 
$$

Set $\displaystyle\ulie=\sum_{i=1}^s \kset X_{\theta_i}$. Since $\bigwedge^{j}\Theta_{f_{\mathbf{a}}} \in (\textstyle{\bigwedge^{j}}
D(\mathbf{h}))\wedge (\textstyle{\bigwedge^{j}}\mathfrak{u}^{*})$, to show that $\bigwedge^{r+t} \Phi_{f_{\mathbf{a}}} \ne 0$, it suffices to prove that $(\bigwedge^{t} \Psi_{f_{\mathbf{a}}})\wedge (\bigwedge^{r} \Theta_{f_{\mathbf{a}}}) \ne 0$.

If $\mathbf{a}\in U$, then we deduce from the preceding paragraphes  that 
$$
\bigwedge\nolimits^{t} \Psi_{f_{\mathbf{a}}} = \lambda v_{1}\wedge \cdots \wedge v_{t} + w,
$$
where $\lambda \in \kset^{*}$ and $w$ is a linear combination of elements 
of the form $v_{z_{i_{1}}} \wedge \cdots \wedge
v_{z_{i_{t}}}$, with $\widetilde{z_{i_{1}}} \cup \cdots \cup
\widetilde{z_{i_{t}}} \ne \mathcal{S}$. It is therefore clear that $(\bigwedge^{t} \Psi_{f_{\mathbf{a}}}) \wedge (\bigwedge^{r} \Theta_{f_{\mathbf{a}}}) \ne 0$ if $\mathbf{a} \in U$.
\end{proof}

\begin{theoreme}\label{th_index}
Let $\Phi$ be a subset of $\Delta^+$ such that $\glie^{\Phi}$ is an ad-nilpotent ideal of $\glie$. Denote by $\calh$ the set of H-sequences of $\Delta^+\setminus\Phi$. Then, we have
$$
\begin{array}{l}
\chi (\qlie_{\Phi}) \leqslant \mathrm{min}\{\ell + \ell(\mathbf{h})  - 2 d(\mathbf{h}) ;\mathbf{h}\in\calh\},\\
\chi (\mlie_{\Phi}) \leqslant  \mathrm{min}\{\ell(\mathbf{h});\mathbf{h}\in\calh\}.
\end{array}
$$
\end{theoreme}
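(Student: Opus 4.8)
The plan is to reduce the index bound to a lower bound on the corank of the alternating form $\Phi_{f_{\mathbf a}}$ for a generic choice of $\mathbf a$, using the characterization $\chi(\alie)=\min\{\corank\Phi_f; f\in\alie^*\}$ recalled before Lemma~\ref{lemma1} (and its obvious analogue for the $A$-module $V$). Concretely, for the Borel quotient I would fix an H-sequence $\mathbf h=(\theta_1,\dots,\theta_s)$ in $E=\Delta^+\setminus\Phi$ and the linear form $f_{\mathbf a}$ with $\mathbf a$ in the open set $U$ produced by Lemma~\ref{lemma1}(iii). Then $\bigwedge^{d(\mathbf h)+t}\Phi_{f_{\mathbf a}}\ne 0$, so $\rk\Phi_{f_{\mathbf a}}\geqslant 2(d(\mathbf h)+t)$, hence
$$
\chi(\qlie_\Phi)\leqslant \corank\Phi_{f_{\mathbf a}}=\dim\qlie_\Phi-\rk\Phi_{f_{\mathbf a}}\leqslant \bigl(\ell+\card E\bigr)-2\bigl(d(\mathbf h)+t\bigr).
$$
So everything comes down to the combinatorial identity $\card E=\ell(\mathbf h)+2t$; taking the minimum over $\mathbf h\in\calh$ then yields the stated bound. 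For $\mlie_\Phi=\glie^{E}$ one argues identically but the Cartan part disappears: there $\Phi_{f_{\mathbf a}}$ restricted to $\glie^E$ is just $\Psi_{f_{\mathbf a}}$ (no $\Theta$ term), Lemma~\ref{lemma1}(iii) gives $\bigwedge^{t}\Psi_{f_{\mathbf a}}\ne 0$, so $\corank\Phi_{f_{\mathbf a}}|_{\glie^E}\leqslant\card E-2t=\ell(\mathbf h)$, giving $\chi(\mlie_\Phi)\leqslant\ell(\mathbf h)$.

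The key combinatorial step is the count $\card E=\ell(\mathbf h)+2t$. I would prove this by the decomposition $E=\Gamma_1\sqcup\cdots\sqcup\Gamma_s$ and show $\card\Gamma_i=1+2t_i$ for each $i$, where $t_i=\card\calg_i$. Indeed, recall $\Gamma_i=H(E_{i-1},\theta_i)=\{\alpha\in E_{i-1};\theta_i-\alpha\in E_{i-1}\cup\{0\}\}$, so the involution $\alpha\mapsto\theta_i-\alpha$ acts on $\Gamma_i$; its unique fixed point is $\theta_i$ itself (a root equal to $\theta_i-\theta_i=0$ is excluded, and $2\alpha=\theta_i$ is impossible in a reduced root system — one should check $\theta_i/2\notin\Delta$, which holds since $\Delta$ is reduced), and every other orbit is a pair $\{\alpha,\beta\}$ with $\alpha+\beta=\theta_i$, i.e.\ an element of $\calg_i$ after ordering. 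Hence $\card\Gamma_i=1+2t_i$, and summing over $i$ gives $\card E=s+2t=\ell(\mathbf h)+2t$, as required. One small point to verify carefully: that each such pair $\{\alpha,\beta\}\subset\Gamma_i$ genuinely contributes a nonzero bracket $[X_\alpha,X_\beta]$ proportional to $X_{\theta_i}$ — this is standard ($\alpha+\beta\in\Delta$ forces $[\glie^\alpha,\glie^\beta]=\glie^{\alpha+\beta}\ne 0$) and was already used implicitly in \eqref{eq1}.

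The main obstacle is not any single deep step but keeping the bookkeeping between the three pieces of Lemma~\ref{lemma1} consistent. Specifically, one must check that the commutative toral subalgebra $\tlie$ of dimension $\ell-d(\mathbf h)$ from part~(i) sits inside $\qlie_\Phi^{f_{\mathbf a}}$ in a way compatible with the rank estimate: the kernel of $\Phi_{f_{\mathbf a}}$ already contains $\tlie$, and part~(ii) shows the $\Theta$-block has rank exactly $2d(\mathbf h)$ on $D(\mathbf h)\oplus\ulie$, while part~(iii) shows the full form reaches rank $2(d(\mathbf h)+t)$ by combining the $\Theta$-block with the $\Psi$-block supported on $\glie^{E\setminus\{\theta_1,\dots,\theta_s\}}$. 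Since these supports are arranged in complementary coordinate subspaces (up to the $D(\mathbf h)$ vs.\ $\ulie$ pairing), the wedge $(\bigwedge^t\Psi_{f_{\mathbf a}})\wedge(\bigwedge^{d(\mathbf h)}\Theta_{f_{\mathbf a}})\ne 0$ is exactly what part~(iii) delivers, so $\rk\Phi_{f_{\mathbf a}}=2(d(\mathbf h)+t)$ for generic $\mathbf a$. Plugging $\rk\Phi_{f_{\mathbf a}}\geqslant 2(d(\mathbf h)+t)$ and $\card E=\ell(\mathbf h)+2t$ into $\corank\Phi_{f_{\mathbf a}}=\dim\qlie_\Phi-\rk\Phi_{f_{\mathbf a}}$ and minimizing over $\mathbf h\in\calh$ finishes both inequalities.
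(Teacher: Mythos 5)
Your proposal is correct and follows essentially the same route as the paper: fix an H-sequence, take $\mathbf a$ in the open set $U$ of Lemma~\ref{lemma1}(iii), deduce $\rk\Phi_{f_{\mathbf a}}\geqslant 2(d(\mathbf h)+t)$ and $\rk\Psi_{f_{\mathbf a}}\geqslant 2t$, and convert these into corank bounds. The only difference is that you explicitly justify the count $\card E=\ell(\mathbf h)+2t$ via the involution $\alpha\mapsto\theta_i-\alpha$ on each $\Gamma_i$, which the paper simply asserts ``by definition''; your verification is correct (and your parenthetical claim that the rank equals exactly $2(d(\mathbf h)+t)$ is unneeded and not actually established, but harmless for the upper bound).
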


\begin{proof} 
Let $\mathbf{h}\in\calh$ with $\ell(\mathbf{h})=s$ and $t$ as defined in the beginning of this section. By definition, we have 
$$
\begin{array}{l}
\dim \qlie_{\Phi} = \dim \mathfrak{h} + \ell(\mathbf{h}) + 2t,\\
\dim \mlie_{\Phi} = \ell(\mathbf{h}) + 2t.
\end{array}
$$
Let $U$ be a non-empty open subset of $(\kset^*)^s$ verifying part (iii) of Lemma \ref{lemma1}. If $\mathbf{a} \in U$, then the fact that $\bigwedge^{d(\mathbf{h})+t} \Phi_{f_{\mathbf{a}}} \ne 0$ implies that $\rk (\Phi_{f_{\mathbf{a}}}) \geqslant 2(d(\mathbf{h})+t)$. Thus 
$$
\dim \qlie_{\Phi}^{f_{\mathbf{a}}} \leqslant \dim \qlie_{\Phi} - 2 (d(\mathbf{h})+t).
$$
Hence 
$$
\dim \qlie_{\Phi}^{f_{\mathbf{a}}} \leqslant \dim \hlie + \ell(\mathbf{h}) - 2d(\mathbf{h}).
$$
In the same manner, if $\mathbf{a} \in U$, then the fact that $\bigwedge^{t} \Psi_{f_{\mathbf{a}}} \ne 0$ implies that $\rk (\Psi_{f_{\mathbf{a}}}) \geqslant 2t$. Thus 
$$
\dim \mlie_{\Phi}^{f_{\mathbf{a}}} \leqslant \dim \mlie_{\Phi} - 2t = \ell(\mathbf{h}).
$$
So we are done.
\end{proof}

For an H-sequence $\mathbf{h}$, we define
$$
c(\mathbf{h}) = \ell + \ell(\mathbf{h})  - 2 d(\mathbf{h}). 
$$

\begin{prop}\label{prop01}
Let us conserve the notations of Theorem \ref{th_index}. If $\mathbf{h}\in\calh$ verifies $c(\mathbf{h}) \in \{ 0,1\}$, then $\chi (\qlie_{\Phi}) = c(\mathbf{h})$.
\end{prop}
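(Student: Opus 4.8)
The plan is to establish the reverse inequality $\chi(\qlie_{\Phi}) \geqslant c(\mathbf{h})$ when $c(\mathbf{h}) \in \{0,1\}$, so that combined with Theorem~\ref{th_index} we get equality. The starting observation is that the index of any finite-dimensional Lie algebra $\alie$ has the same parity as $\dim\alie$, since $\chi(\alie) = \dim\alie - \rk(\Phi_f)$ for generic $f$ and $\rk(\Phi_f)$ is always even. Applying this to $\alie = \qlie_{\Phi}$ and using the formula $\dim\qlie_{\Phi} = \ell + \ell(\mathbf{h}) + 2t$ from the proof of Theorem~\ref{th_index}, we see $\chi(\qlie_{\Phi}) \equiv \ell + \ell(\mathbf{h}) \pmod 2$. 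On the other hand $c(\mathbf{h}) = \ell + \ell(\mathbf{h}) - 2d(\mathbf{h}) \equiv \ell + \ell(\mathbf{h}) \pmod 2$. Hence $\chi(\qlie_{\Phi})$ and $c(\mathbf{h})$ have the same parity.

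Now combine this with the two constraints we already have. First, $\chi(\qlie_{\Phi}) \geqslant 0$ trivially, and $\chi(\qlie_{\Phi}) > 0$ whenever $\qlie_{\Phi}$ is not abelian; but in fact we only need the parity statement together with the upper bound. Theorem~\ref{th_index} gives $\chi(\qlie_{\Phi}) \leqslant c(\mathbf{h})$. If $c(\mathbf{h}) = 0$, then $0 \leqslant \chi(\qlie_{\Phi}) \leqslant 0$ forces $\chi(\qlie_{\Phi}) = 0 = c(\mathbf{h})$. If $c(\mathbf{h}) = 1$, then $\chi(\qlie_{\Phi}) \leqslant 1$ and $\chi(\qlie_{\Phi}) \equiv 1 \pmod 2$, so $\chi(\qlie_{\Phi}) = 1 = c(\mathbf{h})$. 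In both cases we are done.

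The only point requiring a word of justification is the parity claim, which is standard: for the generic (regular) $f \in \qlie_{\Phi}^*$, one has $\dim\qlie_{\Phi}^f = \chi(\qlie_{\Phi})$, and $\qlie_{\Phi}^f$ is the kernel of the alternating form $\Phi_f$, whose rank $\dim\qlie_{\Phi} - \dim\qlie_{\Phi}^f$ is necessarily even. I do not anticipate a genuine obstacle here; the main thing to be careful about is simply to invoke the right dimension formula for $\dim\qlie_{\Phi}$ — namely the one appearing in the proof of Theorem~\ref{th_index}, which already exhibits $\dim\qlie_{\Phi} - (\ell + \ell(\mathbf{h}))$ as the even number $2t$ — so that the congruence $\chi(\qlie_{\Phi}) \equiv c(\mathbf{h}) \pmod 2$ is transparent.
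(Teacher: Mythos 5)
Your proof is correct and follows essentially the same route as the paper: the case $c(\mathbf{h})=0$ is immediate from the upper bound, and for $c(\mathbf{h})=1$ both arguments use that $\dim\qlie_{\Phi}-\chi(\qlie_{\Phi})$ is even (being the rank of an alternating form) together with $\dim\qlie_{\Phi}-c(\mathbf{h})=2(d(\mathbf{h})+t)$ to conclude that $\chi(\qlie_{\Phi})$ and $c(\mathbf{h})$ have the same parity. Nothing is missing.
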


\begin{proof} The case $c(\mathbf{h}) = 0$ is clear by Theorem \ref{th_index}. So let us suppose that 
$c(\mathbf{h}) = 1$.

We have $\dim \qlie_{\Phi} - c(\mathbf{h}) = 2(d(\mathbf{h})+t)$. Since $\dim
\qlie_{\Phi} - \chi (\qlie_{\Phi})$ is an even integer (it is the rank of an alternating bilinear form on 
$\qlie_{\Phi}$), we deduce that $c(\mathbf{h})$ and $\chi
(\qlie_{\Phi})$ are of the same parity. So $\chi
(\qlie_{\Phi}) = 1$. 
\end{proof}

%%%%%%%%%%%%%%%%%%%%%%%%%%%%%%%%%%%%%%%%%%%%%%%%%%%%%%%%%%%%%%%%%%%%%%%%%%%%%%%%%%%%%%%%%%%%%%%%%%%%%%%%%%%%%%%%%%%%%%%%%%%%%%%%%%%%%%%%%%%%%%%%%%%%%
\section{Type A}

Let us assume in this section 
that $\glie$ is of type $A_{\ell}$.
We shall show that the upper bound for the index of $\qlie_{\Phi}$ is exact in this special case.

We fix a subset $\Phi$ of $\Delta^+$ such that $\ilie=\glie^{\Phi}$ is an ad-nilpotent ideal of $\glie$. As in example \ref{example1}, we use the numbering of simple roots in \cite{TYL}, 
and we set $\alpha_{i,j}=\alpha_i+\dots+\alpha_j$ when $i\leqslant j$. 

We fix the following total order $\prec$ on $\Delta^{+}$ compatible with the partial order $\leqslant$ :
$$
\alpha_{1,\ell}\succ\alpha_{1,\ell-1}\succ\dots\succ\alpha_{1,2}\succ\alpha_{1,1}\succ\alpha_{2,\ell}\succ\alpha_{2,\ell-1}\succ\dots\succ\alpha_{\ell-1,\ell}\succ\alpha_{\ell,\ell}.
$$
It is clear that there is a unique H-sequence $\mathbf{h}=(\theta_1,\dots,\theta_s)$ of $E=\Delta^+\setminus\Phi$, satisfying $\theta_1\succ\theta_2\succ\dots\succ\theta_s$. This H-sequence is 
considered by Panov in \cite{Pa}, and we shall call this H-sequence 
the \textit{Panov H-sequence} of $E$.

Using the notation of section \ref{sec:H}, for $j=1,\dots,s$, set :
$$
\nlie_j=\bigoplus_{\alpha\in E_j\cup \Phi}\glie^{\alpha}\quad,\quad \mlie_j=\nlie_j/\ilie.
$$

In \cite{Pa}, Panov proved that for $j=1,\dots,s$, $\nlie_j$ and $\mlie_j$ are Lie subalgebras. Consider the localization $S(\mlie_{j-1})_{X_{\theta_{j}}}$ of the algebra $S(\mlie_{j-1})$ with respect to the multiplicative subset generated by $X_{\theta_{j}}$. He defined an embedding of Poisson algebras $\Psi_{j-1} : S(\mlie_j)\rightarrow S(\mlie_{j-1})_{X_{\theta_{j}}}$. Moreover, one observes directly from the definition of $\Psi_{j-1}$ that it is $\hlie$-equivariant. 

Extending the $\Psi_j$ with the appropriate localizations, we set 
$$
f_1=X_{\theta_1} \quad , \quad
f_j=\Psi_{0}\circ\dots\circ\Psi_{j-2}(X_{\theta_j}) \mbox{ for } 2\leqslant j\leqslant s.
$$

Panov proved that the $f_j$'s are algebraically independent elements of $\kset(\mlie_{\Phi}^*)^{\mlie_{\Phi}}$. More precisely, we have $\kset(\mlie_{\Phi}^*)^{\mlie_{\Phi}}=\kset(f_1,\dots,f_s)$,
and hence $\chi (\mlie_{\Phi})=s$. 
Furthermore, using the fact that the embeddings $\Psi_{j-1}$ are $\hlie$-equivariant, 
the element $f_j$ is of weight $\theta_j$ for $j=1,\dots ,s$.

%Set $r=d(\mathbf{h})$. 
Let $\cali\subset\{1,\dots,s\}$ be such that $\{\theta_i;i\in\cali\}$ is a basis of $D(\mathbf{h})$. For 
$j\in\{1,\dots ,s\}\setminus\cali$, we have 
$$
\lambda_j\theta_j=\sum_{i\in\cali} \lambda_i\theta_i
$$
where $\lambda_i\in\zset$ for $i\in\cali$ and $\lambda_j\in\zset^*$.

Set
$$
g_j=\left(\prod_{i\in\cali} f_i^{\lambda_i}\right)f_j^{-\lambda_j}\in\kset(\mlie_{\Phi}^*)^{\mlie_{\Phi}}.
$$
By construction, the elements $g_j$ are of weight zero. Hence $g_j\in\kset(\mlie_{\Phi}^*)^{\qlie_{\Phi}}$. Since the elements $f_1,\dots,f_s$ are algebraically independent, it follows that 
\begin{equation}\label{minoration}
\chi(\qlie_{\Phi},\mlie_{\Phi})=\mathrm{tr\, deg}_{\kset}(\kset(\mlie_{\Phi}^*)^{\qlie_{\Phi}})\geqslant \ell(\mathbf{h})-d(\mathbf{h}).
\end{equation}

\begin{theoreme}\label{th_indice_rep}
Let $\mathbf{h}=(\theta_1,\dots,\theta_s)$ be the Panov H-sequence of $E$. There exists a non-empty open subset $U$ of $(\kset^*)^s$ such that $\dim\qlie_{\Phi}^{f_{\mathbf{a}}}=c(\mathbf{h})$ whenever $\mathbf{a} \in U$. Moreover, we have 
$$
\chi(\qlie_{\Phi},\mlie_{\Phi})=\ell(\mathbf{h})-d(\mathbf{h}).
$$
\end{theoreme}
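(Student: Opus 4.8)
The plan is to sandwich $\dim\qlie_\Phi^{f_{\mathbf a}}$ between $c(\mathbf h)$ and $c(\mathbf h)$. For the upper estimate I would simply reuse the computation already made in the proof of Theorem~\ref{th_index}: take for $U$ the non-empty open subset of $(\kset^*)^{s}$ furnished by Lemma~\ref{lemma1}(iii), so that $\bigwedge^{d(\mathbf h)+t}\Phi_{f_{\mathbf a}}\ne 0$ and hence $\rk\Phi_{f_{\mathbf a}}\geqslant 2(d(\mathbf h)+t)$; since $\dim\qlie_\Phi=\ell+\ell(\mathbf h)+2t$ this gives
\[
\dim\qlie_\Phi^{f_{\mathbf a}}\;\leqslant\;\dim\qlie_\Phi-2(d(\mathbf h)+t)\;=\;c(\mathbf h)\qquad(\mathbf a\in U).
\]
The whole point is then to prove the reverse inequality $\dim\qlie_\Phi^{f_{\mathbf a}}\geqslant c(\mathbf h)$ for every $\mathbf a\in(\kset^*)^{s}$: combining the two on $U$ yields the first assertion, and the collapse of the chain of inequalities below yields the second.

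For the lower estimate the idea is to pass from the coadjoint picture of $\qlie_\Phi$ to the module picture of the ideal $\mlie_\Phi=\nlie/\ilie$ of $\qlie_\Phi$. Put $h_{\mathbf a}=f_{\mathbf a}|_{\mlie_\Phi}\in\mlie_\Phi^*$ and recall $\qlie_\Phi^{h_{\mathbf a}}=\{X\in\qlie_\Phi;X.h_{\mathbf a}=0\}$. Since $[\qlie_\Phi,\qlie_\Phi]\subseteq\mlie_\Phi$, decomposing $Y\in\qlie_\Phi=\hlie\oplus\mlie_\Phi$ shows that $X\in\qlie_\Phi^{f_{\mathbf a}}$ if and only if $f_{\mathbf a}([X,Z])=0$ for all $Z\in\mlie_\Phi$ — which, as $[X,Z]\in\mlie_\Phi$ and $h_{\mathbf a}=f_{\mathbf a}|_{\mlie_\Phi}$, is exactly $X\in\qlie_\Phi^{h_{\mathbf a}}$ — together with $f_{\mathbf a}([X,H])=0$ for all $H\in\hlie$; writing $X=X_0+\sum_{\gamma\in E}x_\gamma X_\gamma$ with $X_0\in\hlie$, this last condition reads $\sum_{i=1}^s x_{\theta_i}a_i\theta_i=0$ in $\hlie^*$, i.e. $X\in\ker\rho$ where $\rho\colon\qlie_\Phi\to D(\mathbf h)$ is the linear map $X\mapsto\sum_i x_{\theta_i}a_i\theta_i$. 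Hence $\qlie_\Phi^{f_{\mathbf a}}=\qlie_\Phi^{h_{\mathbf a}}\cap\ker\rho$, and since $\rho$ takes values in $D(\mathbf h)$,
\[
\dim\qlie_\Phi^{f_{\mathbf a}}\;\geqslant\;\dim\qlie_\Phi^{h_{\mathbf a}}-d(\mathbf h).
\]

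Next I would feed in Panov's input. By the definition of the index of the $\qlie_\Phi$-module $\mlie_\Phi$ and $\dim\qlie_\Phi-\dim\mlie_\Phi=\ell$, one has $\dim\qlie_\Phi^{h}\geqslant\ell+\chi(\qlie_\Phi,\mlie_\Phi)$ for every $h\in\mlie_\Phi^*$, in particular for $h=h_{\mathbf a}$; and \eqref{minoration} gives $\chi(\qlie_\Phi,\mlie_\Phi)\geqslant\ell(\mathbf h)-d(\mathbf h)$. Combining the three inequalities,
\[
\dim\qlie_\Phi^{f_{\mathbf a}}\;\geqslant\;\ell+\chi(\qlie_\Phi,\mlie_\Phi)-d(\mathbf h)\;\geqslant\;\ell+\ell(\mathbf h)-2d(\mathbf h)\;=\;c(\mathbf h),
\]
valid for all $\mathbf a\in(\kset^*)^{s}$. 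Taking $\mathbf a\in U$ and comparing with the upper bound forces equality throughout: $\dim\qlie_\Phi^{f_{\mathbf a}}=c(\mathbf h)$, and moreover $\ell+\chi(\qlie_\Phi,\mlie_\Phi)-d(\mathbf h)=\ell+\ell(\mathbf h)-2d(\mathbf h)$, i.e. $\chi(\qlie_\Phi,\mlie_\Phi)=\ell(\mathbf h)-d(\mathbf h)$.

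I do not expect a genuine obstacle here: the substantial part of the upper bound is already packaged in Lemma~\ref{lemma1} and Theorem~\ref{th_index}, and the substantial part of the lower bound is Panov's construction behind \eqref{minoration}. The only delicate point is the bookkeeping identity $\qlie_\Phi^{f_{\mathbf a}}=\qlie_\Phi^{h_{\mathbf a}}\cap\ker\rho$, together with the observation that — because $\rho$ has image in the $d(\mathbf h)$-dimensional space $D(\mathbf h)$ — the crude estimate $\dim\qlie_\Phi^{f_{\mathbf a}}\geqslant\dim\qlie_\Phi^{h_{\mathbf a}}-d(\mathbf h)$ already suffices for the chain to close up; in particular one never needs to know that $\rho$ is surjective on $\qlie_\Phi^{h_{\mathbf a}}$, although a posteriori the collapse of the chain shows that it is.
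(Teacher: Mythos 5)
Your proposal is correct and follows essentially the same route as the paper: the upper bound $\dim\qlie_{\Phi}^{f_{\mathbf a}}\leqslant c(\mathbf h)$ is taken from Lemma \ref{lemma1}(iii) as in Theorem \ref{th_index}, and the lower bound is obtained by combining Panov's inequality \eqref{minoration} with the observation that passing from the form restricted to $\mlie_{\Phi}$ to the full form on $\qlie_{\Phi}$ costs at most $d(\mathbf h)$ in corank. Your identity $\qlie_{\Phi}^{f_{\mathbf a}}=\qlie_{\Phi}^{h_{\mathbf a}}\cap\ker\rho$ with $\operatorname{im}\rho\subseteq D(\mathbf h)$ is exactly the paper's rank estimate $\rk(M)\leqslant\rk(A)+\rk(M')$ with $\rk(A)=d(\mathbf h)$, expressed in terms of stabilizers rather than block matrices.
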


\begin{proof}
Let $U$ be a non-empty open subset of $(\kset^*)^s$ verifying part (iii) of Lemma \ref{lemma1}. 
Set $r=d(\mathbf{h})$. Let $\cali=\{ i_{1},\dots ,i_{r}\} \subset\{1,\dots,s\}$ be such that 
$(\theta_{i_{1}}, \dots ,\theta_{i_{r}})$ is a basis of 
$D(\mathbf{h})$ and complete to a basis $\mathcal{B}' = (\beta_{1}, \dots ,\beta_{\ell})$ of 
$\mathfrak{h}^{*}$ such that $\beta_k=\theta_{i_{k}}$ for $k=1,\dots ,r$. Denote by 
$\mathcal{B} = (h_{1},\dots , h_{\ell})$ the basis of $\mathfrak{h}$ dual to $\mathcal{B}'$.

Let $m=\dim\mlie_{\Phi}$ and $\mathcal{C}$ be a basis of $\mlie_{\Phi}$. Then the matrix of $\Phi_{f_{\mathbf{a}}}$ in the basis $\mathcal{B}'\cup\mathcal{C}$ is 
$$
M = \begin{pmatrix}
0_{\ell,\ell} & A\\
-{}^{t}A & B
\end{pmatrix},
$$
where $A$ is an element of rank $d(\mathbf{h})$ in the set of $\ell\times m$ matrices 
$\mathcal{M}_{\ell,m}(\kset )$, and 
$B \in \mathcal{M}_{m,m}(\kset )$ the set of $m\times m$ matrices. 
Set 
$$
M' = \begin{pmatrix}
A\\
B
\end{pmatrix}.
$$
Then by \eqref{minoration}, we have
\begin{equation}\label{eqrg1}
\dim\mlie_{\Phi}-\rk(M')\geqslant \chi(\qlie_{\Phi},\mlie_{\Phi})\geqslant \ell(\mathbf{h})-d(\mathbf{h}).
\end{equation}
It follows that 
\begin{equation}\label{eqrg2}
\rk(M')\leqslant \dim\mlie_{\Phi}-\ell(\mathbf{h})+d(\mathbf{h}) \quad,
\end{equation}
and since $\rk(M)\leqslant \rk(A)+\rk(M')$, we deduce that 
\begin{equation}\label{eqrg3}
\rk(M)\leqslant \dim\mlie_{\Phi}-\ell(\mathbf{h})+2d(\mathbf{h}).
\end{equation}
Hence, 
$$
\dim \qlie_{\Phi}^{f_{\mathbf{a}}}=\dim\qlie_{\Phi}-\rk(M)\geqslant c(\mathbf{h}).
$$
It follows by Theorem \ref{th_index} that $\dim\qlie_{\Phi}^{f_{\mathbf{a}}}=c(\mathbf{h})$ and we have equalities in \eqref{eqrg1} and \eqref{eqrg2}, \eqref{eqrg3}. Consequently $\rk(M')=\dim\mlie_{\Phi}-\ell(\mathbf{h})+d(\mathbf{h})$ and $\chi(\qlie_{\Phi},\mlie_{\Phi})=\ell(\mathbf{h})-d(\mathbf{h}).$
\end{proof}

\begin{theoreme}\label{th_indice}
Let $\mathbf{h}=(\theta_1,\dots,\theta_s)$ be the Panov H-sequence of $E$. Then we have $\chi(\qlie_{\Phi})=\ell-\ell(\mathbf{h})+2d(\mathbf{h})$.
\end{theoreme}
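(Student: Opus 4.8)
The plan is to combine the upper bound already at hand with a matching lower bound. For the upper bound, apply Theorem~\ref{th_index} to the Panov H-sequence $\mathbf{h}$: this yields $\chi(\qlie_{\Phi})\leqslant c(\mathbf{h})$ (equivalently, by Theorem~\ref{th_indice_rep}, the form $f_{\mathbf{a}}$ with $\mathbf{a}\in U$ already realizes $\dim\qlie_{\Phi}^{f_{\mathbf{a}}}=c(\mathbf{h})$). Everything thus reduces to the reverse inequality $\chi(\qlie_{\Phi})\geqslant c(\mathbf{h})$, which I would prove by exhibiting $c(\mathbf{h})=(\ell(\mathbf{h})-d(\mathbf{h}))+(\ell-d(\mathbf{h}))$ algebraically independent $\qlie_{\Phi}$-invariant rational functions on $\qlie_{\Phi}^{*}$, using $\chi(\qlie_{\Phi})=\trd_{\kset}\kset(\qlie_{\Phi}^{*})^{\qlie_{\Phi}}$.

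The first $\ell(\mathbf{h})-d(\mathbf{h})$ invariants come from $\mlie_{\Phi}$. Since $\mlie_{\Phi}$ is an ideal of $\qlie_{\Phi}$, the restriction $\rho\colon\qlie_{\Phi}^{*}\to\mlie_{\Phi}^{*}$, $f\mapsto f|_{\mlie_{\Phi}}$, is a surjective $\qlie_{\Phi}$-equivariant linear map (with $\mlie_{\Phi}^{*}$ the dual of the module $\mlie_{\Phi}$; one checks $\rho(X.f)=X.\rho(f)$ from $[\qlie_{\Phi},\mlie_{\Phi}]\subseteq\mlie_{\Phi}$), so $\rho^{*}$ embeds $\kset(\mlie_{\Phi}^{*})^{\qlie_{\Phi}}$ into $\kset(\qlie_{\Phi}^{*})^{\qlie_{\Phi}}$. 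The functions $g_{j}$ ($j\notin\cali$) pull back to $\qlie_{\Phi}$-invariants on $\qlie_{\Phi}^{*}$, and being algebraically independent they form a transcendence basis of $\kset(\mlie_{\Phi}^{*})^{\qlie_{\Phi}}$, whose transcendence degree is $\chi(\qlie_{\Phi},\mlie_{\Phi})=\ell(\mathbf{h})-d(\mathbf{h})$ by Theorem~\ref{th_indice_rep}.

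For the remaining $\ell-d(\mathbf{h})=\dim\tlie$ invariants I would use the torus direction. The subspace $\hlie^{*}=\{f\colon f|_{\mlie_{\Phi}}=0\}=\ker\rho$ is pointwise fixed by the coadjoint action (again because $[\qlie_{\Phi},\qlie_{\Phi}]\subseteq\mlie_{\Phi}$), so $\rho$ exhibits $\qlie_{\Phi}^{*}$ as an affine bundle over $\mlie_{\Phi}^{*}$ with fibre $\hlie^{*}$; over a generic $\psi\in\mlie_{\Phi}^{*}$ the stabilizer $\qlie_{\Phi}^{\psi}$ acts on the fibre $\rho^{-1}(\psi)\cong\hlie^{*}$ purely by translations, $X$ acting by the translation of $\hlie^{*}$ given by $h'\mapsto\psi([h',n_{X}])$, where $n_{X}$ is the $\mlie_{\Phi}$-component of $X$. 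The crucial point --- and, I expect, the main obstacle --- is to show that this space of translations is exactly $D(\mathbf{h})$, hence of dimension $d(\mathbf{h})$; dually this is a rank bound for the alternating form $\Phi_{\psi}$ valid for \emph{every} $\psi$, not just for $\psi=f_{\mathbf{a}}|_{\mlie_{\Phi}}$, sharpening the computation in the proof of Theorem~\ref{th_indice_rep}: the inequality $\rk M'\leqslant\dim\mlie_{\Phi}-\chi(\qlie_{\Phi},\mlie_{\Phi})$ used there holds for all $\psi$, and the extra input required is that the ``$\hlie$-block'' of $\Phi_{\psi}$ increases the rank by at most $d(\mathbf{h})$ over $\rk M'$. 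I would try to extract this from Panov's explicit data: the semi-invariants $f_{1},\dots,f_{s}$ carry $\hlie$-weights $\theta_{1},\dots,\theta_{s}$, a generic $\psi$ recovers these weights modulo the coadjoint action, $D(\mathbf{h})$ is precisely their span, and Lemma~\ref{prop_sequence} rules out the accidental additive relations among the $\theta_{i}$ that could enlarge the translation space. Granting this, $\ell-d(\mathbf{h})$ coordinates on $\hlie^{*}/D(\mathbf{h})$ extend to $\qlie_{\Phi}$-invariant functions on $\qlie_{\Phi}^{*}$, algebraically independent from the $\rho^{*}g_{j}$, and together they furnish $c(\mathbf{h})$ algebraically independent invariants; hence $\chi(\qlie_{\Phi})\geqslant c(\mathbf{h})$, and with the upper bound $\chi(\qlie_{\Phi})=c(\mathbf{h})$.
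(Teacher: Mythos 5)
Your reduction to the lower bound $\chi(\qlie_{\Phi})\geqslant c(\mathbf{h})$ is the right first move, and the first batch of $\ell(\mathbf{h})-d(\mathbf{h})$ invariants is sound: the $g_j$ are weight-zero, hence $\qlie_{\Phi}$-invariant, and pulling them back along the $\qlie_{\Phi}$-equivariant restriction $\rho$ is legitimate; this is exactly the content of \eqref{minoration} and Theorem \ref{th_indice_rep}. The problem is the second batch. Everything there hinges on the claim that for generic $\psi\in\mlie_{\Phi}^{*}$ the translation space $T_{\psi}=\{(X.f)|_{\hlie}\,;\,X\in\qlie_{\Phi}^{\psi}\}\subseteq\hlie^{*}$ is contained in $D(\mathbf{h})$, and you explicitly defer this (``Granting this\dots''). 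That claim is not a technicality: combined with the orbit count $\dim Q.f=\dim Q.\rho(f)+\dim T_{\rho(f)}$ it is equivalent to the theorem, so deferring it defers the whole proof. The heuristic you offer (the semi-invariants $f_j$ have weights $\theta_j$ spanning $D(\mathbf{h})$, and Lemma \ref{prop_sequence} excludes additive relations) gives information about $\qlie_{\Phi}^{\psi}$ --- namely $\qlie_{\Phi}^{\psi}\subseteq\tlie\oplus\mlie_{\Phi}$ --- but an upper bound on $\dim T_{\psi}$ requires controlling the functionals $h\mapsto\psi([h,n_{X}])$ for the nilpotent parts $n_{X}$ of elements of $\qlie_{\Phi}^{\psi}$, which the weight data alone does not do. There is also a secondary gap: even granting $T_{\psi}\subseteq D(\mathbf{h})$ generically, linear coordinates on $\hlie^{*}/D(\mathbf{h})$ do \emph{not} extend to invariants by mere projection, since for arbitrary $X=H+\sum_{\alpha}c_{\alpha}X_{\alpha}\in\qlie_{\Phi}$ one has $(X.f)|_{\hlie}=\pm\sum_{\alpha}c_{\alpha}f(X_{\alpha})\,\alpha$, which need not lie in $D(\mathbf{h})$; one would have to correct these coordinates by rational functions of $\rho(f)$, i.e.\ build a rational section of the orbit map, which is again nontrivial.

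The paper avoids all of this by a different mechanism. Since $[\qlie_{\Phi},\qlie_{\Phi}]=\mlie_{\Phi}$, the alternating form $\Phi_{f}$ depends only on $f|_{\mlie_{\Phi}}$. Using Panov's separating invariants $z_{1},\dots,z_{s}$ (with their triangular form $z_{i}=X_{\theta_{i}}P+R$), one shows that distinct points of the slice $\Omega'=\{f_{\mathbf{a}}\,;\,\mathbf{a}\in U\}\cap\bigcap_{i}U_{i}$ lie in distinct $M$-orbits, whence $M.\Omega'$ is dense in $\mlie_{\Phi}^{*}$. A regular $\varphi\in\qlie_{\Phi}^{*}$ can therefore be chosen (after conjugation) with $\varphi|_{\mlie_{\Phi}}=f_{\boldsymbol{\lambda}}|_{\mlie_{\Phi}}$ for some $f_{\boldsymbol{\lambda}}\in\Omega'$, so $\Phi_{\varphi}=\Phi_{f_{\boldsymbol{\lambda}}}$ and $\chi(\qlie_{\Phi})=\dim\qlie_{\Phi}^{f_{\boldsymbol{\lambda}}}=c(\mathbf{h})$ by Theorem \ref{th_indice_rep}. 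If you want to salvage your route, you would need to prove the statement about $T_{\psi}$ for generic $\psi$ --- and the most natural way to do so is essentially to rerun the paper's density argument, at which point the detour through invariant functions is unnecessary.
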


\begin{proof}
Let $U$ be a non-empty open subset of $(\kset^*)^s$ verifying part (iii) of Lemma \ref{lemma1}. 
Let $S$ be the subset of $\mlie_{\Phi}^*$ consisting of elements of the form
$$
f_{\boldsymbol{\lambda}}=\sum_{i=1}^s \lambda_i X_{\theta_i}^*
$$
for $\boldsymbol{\lambda}=(\lambda_1,\dots,\lambda_s)\in\kset^s$. Then $\Omega=\{f_{\mathbf{a}} ;\mathbf{a}\in U\}$ is an open subset of $S$.

Let $M$ be the algebraic adjoint group of $\mlie_{\Phi}$. Consider the elements $z_1,\dots,z_s$ of $\kset[\mlie_{\Phi}^*]^M$ constructed by Panov in \cite{Pa} such that $\kset(\mlie_{\Phi}^*)^M=\kset(z_1,\dots,z_s)$, and
\begin{equation}\label{eq2}
z_i=X_{\theta_i}P+R,
\end{equation}
where $P$ is some product of powers of $z_1,\dots,z_{i-1}$ and $R$ is a polynomial in $X_{\alpha}$ for $\alpha\succ\theta_i$.

For $i=1,\dots,s$, denote by $U_i=\{f\in\qlie_{\Phi}^*;z_i(f)\not=0\}$ the standard open subset of 
$\qlie_{\Phi}^*$ associated to $z_i$. By \eqref{eq2}, $z_1=X_{\theta_1}$ so we clearly have 
$U_1\cap \Omega\not= \emptyset$. Next, for $i>0$, we have
$$
z_{i+1}(f_{\boldsymbol{\lambda}})=
\lambda_{i+1}P(f_{\boldsymbol{\lambda}})+R(f_{\boldsymbol{\lambda}}).
$$
By the properties of $P$ and $R$ from the preceding paragraph, $P$ depends only
on $z_{1},\dots ,z_{i}$, and $R(f_{\boldsymbol{\lambda}})$ depends only on 
$\lambda_{1},\dots ,\lambda_{i}$. By induction, we obtain
$$
\Omega\cap\left(\bigcap_{j=1}^{i+1} U_{j}\right)\neq \emptyset.
$$
Hence $\displaystyle\Omega'=\Omega\cap\left(\bigcap_{i=1}^s U_i\right)$ is a 
non-empty open subset of $\Omega$.

Consider the map
$$
\begin{array}{ccclcr}
\Psi &: &M\times\Omega' &\rightarrow& \mlie^* \\
&&(m,f)&\mapsto &m.f \\
\end{array}
$$

Assume that $f_{\boldsymbol{\lambda}}$ and $f_{\boldsymbol{\mu}}$ are two elements of $\Omega'$ which are in the same $M$-orbit. Then we have 
$z_i(f_{\boldsymbol{\lambda}})=z_i(f_{\boldsymbol{\mu}})$
for $i=1,\dots,s$. In particular, we have $z_1(f_{\boldsymbol{\lambda}})=
z_1(f_{\boldsymbol{\mu}})$ so $\lambda_1=\mu_1$. 

Let us proceed by induction. Suppose that $i > 0$ and that $\lambda_{j}=\mu_{j}$
for $1\leqslant j\leqslant i$. We have
\begin{eqnarray*}
z_{i+1}(f_{\boldsymbol{\lambda}})&=&\lambda_{i+1}P(f_{\boldsymbol{\lambda}})+
R(f_{\boldsymbol{\lambda}}),\\
z_{i+1}(f_{\boldsymbol{\mu}})&=&\mu_{i+1}P(f_{\boldsymbol{\mu}})+R(f_{\boldsymbol{\mu}}).
\end{eqnarray*}
Since $f_{\boldsymbol{\lambda}},f_{\boldsymbol{\mu}}\in\Omega'$, 
we deduce from the properties of $P$ and $R$ described above that $\lambda_{i+1}=\mu_{i+1}$.
Hence $\boldsymbol{\lambda}=\boldsymbol{\mu}$. 

It follows that for any $f_{\boldsymbol{\lambda}}\in\Omega'$, we have 
$$
\Psi^{-1}(f_{\boldsymbol{\lambda}})=\{(m,g)\in M\times\Omega'; m.g=f_{\boldsymbol{\lambda}}\}\simeq \mathrm{Stab}_M (f_{\boldsymbol{\lambda}}).
$$
By \cite{Pa} and Theorem \ref{th_index}, we have 
$\dim \mlie_{\Phi}^{f_{\boldsymbol{\lambda}}}=\ell(\mathbf{h})=s$. Since 
$\dim \mlie_{\Phi}^{f_{\boldsymbol{\lambda}}}=\dim \mathrm{Stab}_M (f_{\boldsymbol{\lambda}})$ and 
$\dim \mathrm{Im}\Psi=\dim(M\times\Omega')-\dim \Psi^{-1}(f_{\boldsymbol{\lambda}})$, 
we obtain that $\dim \mathrm{Im}\Psi=\dim M$. Therefore $M.\Omega'$ contains an open 
subset $\mathcal O$ of $\mlie_{\Phi}^*$.

Let $p$ be the projection $\qlie_{\Phi}^*\rightarrow \mlie_{\Phi}^*$ via restriction. Then $p$ is $M$-equivariant. Since the set of regular elements of $\qlie_{\Phi}^*$ is an open subset, we deduce that 
there exist $\varphi\in p^{-1}(\mathcal O)$ and $f_{\boldsymbol{\lambda}}\in\Omega'$ such that 
$\varphi$ is regular and 
$$
\varphi_{|\mlie_{\Phi}}=f_{\boldsymbol{\lambda}}{}_{|\mlie_{\Phi}}.
$$
For all $X,Y\in\qlie_{\Phi}$, we have  $\varphi([X,Y])=f_{\boldsymbol{\lambda}}([X,Y])$, and therefore
$$
\mathrm{Mat} (\Phi_{\varphi})=\begin{pmatrix}
0_{\ell,\ell} & A\\
-{}^{t}A & B
\end{pmatrix}=
\mathrm{Mat} (\Phi_{f_{\boldsymbol{\lambda}}}).
$$
Hence by Theorem \ref{th_indice_rep}, we have 
$$
\chi(\qlie_{\Phi})=\dim \qlie_{\Phi}^{f_{\boldsymbol{\lambda}}}=\ell+\ell(\mathbf{h}) -2d(\mathbf{h}).
$$
\end{proof}

\begin{Remark}
According to the previous theorem, 
$c(\mathbf{h})$ is minimal when $\mathbf{h}$ is the Panov H-sequence.

Let $\glie$ be of type $A_6$. 
Set $\Phi=\{\alpha\in\Delta^+; \alpha \geqslant \alpha_{2,5}\}$, and 
$E=\Delta^+\setminus\Phi$. Then the Panov H-sequence of $E$ is 
$\mathbf{h}=\{\alpha_{1,4}, \alpha_{2,3}, \alpha_{3,5}, \alpha_{4,6}, \alpha_{5,6}, \alpha_{5,5}\}$ and 
we have $\ell(\mathbf{h})=d(\mathbf{h}) =6$.

We  have another H-sequence 
$\mathbf{h}'=(\alpha_{3,5},\alpha_{4,6},\alpha_{6,6},\alpha_{1,4},\alpha_{1,3},\alpha_{1,2},
\alpha_{2,3},$ $\alpha_{2,2})$ such that $\ell(\mathbf{h}')=8$ and $d(\mathbf{h}')=6$. 
Observe that $c(\mathbf{h})=0$ and $c(\mathbf{h}')=2$.
\end{Remark}

%---------------------------------------------------------------------------------------------------------------------------------------

\section{Stability}

Let $\mathfrak{a}$ be an  algebraic Lie algebra and let $A$ be its adjoint algebraic group. 
Recall that $g \in\mathfrak{a}^{*}$ is \emph{stable} if there exists an open subset $U$ of 
$\mathfrak{a}^{*}$ containing $g$ such that $\mathfrak{a}^{g}$ and $\mathfrak{a}^{h}$ are $A$-conjugate for all $h \in U$. 

The following result is proved in \cite{TY} :
\begin{prop}\label{stable}
Let $\mathfrak{a}$ be an  algebraic Lie algebra and $f \in\mathfrak{a}^{*}$. 
\begin{enumerate}[(i)]
\item If $f$ is stable, then it is a regular element of $\mathfrak{a}^{*}$.
\item The linear form $f$ is stable if and only if $[\alie,\alie^f]\cap\alie^f=\{0\}$.
\end{enumerate}
\end{prop}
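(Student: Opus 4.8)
The plan is to prove the two assertions in sequence, the second one supplying most of the work. For part~(i), I would argue by contradiction: suppose $f$ is stable but not regular, so $\dim\alie^f > \chi(\alie)$. The set of regular elements is a non-empty Zariski-open subset of $\alie^*$, and by hypothesis there is an open neighbourhood $U$ of $f$ on which $\alie^h$ is $A$-conjugate to $\alie^f$ for all $h\in U$; in particular $\dim\alie^h=\dim\alie^f$ for all $h\in U$. But $U$ meets the open dense set of regular elements, which would force $\dim\alie^f=\chi(\alie)$, a contradiction. So $f$ must be regular.

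For part~(ii), the key object is the coadjoint orbit map. Consider the morphism $\rho\colon A\times\alie^*\to\alie^*\times\alie^*$, $(g,h)\mapsto(h,g.h)$, or more conveniently fix the action map $A\to\alie^*$, $g\mapsto g.f$, whose differential at the identity is $X\mapsto X.f$ with kernel $\alie^f$; thus the tangent space to the orbit $A.f$ at $f$ is $\alie.f=(\alie^f)^\perp$ (annihilator computed via the pairing $\alie\times\alie^*\to\kset$). The condition ``$\alie^h$ is $A$-conjugate to $\alie^f$ for $h$ near $f$'' should be reformulated as: the family of subspaces $h\mapsto\alie^h$ is, locally near $f$, the pullback of a single orbit, equivalently the differential of $h\mapsto\alie^h$ (as a map to a Grassmannian, well-defined on the locus where $\dim\alie^h$ is locally constant, i.e. on the regular locus by part~(i)) has its image contained in the tangent space to the $A$-action on that Grassmannian point. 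Concretely: stability at the regular point $f$ is equivalent to the statement that for every $h$ in a neighbourhood of $f$ the subalgebra $\alie^h$ is obtained from $\alie^f$ by an element of $A$; differentiating along a curve $h(\epsilon)=f+\epsilon\xi$ with $\xi\in\alie^*$ arbitrary, the derivative of $\alie^{h(\epsilon)}$ lands in the $A^f$-orbit directions, which translates into $[\alie,\alie^f]\subset$ a complement of $\alie^f$; pushing this through gives exactly $[\alie,\alie^f]\cap\alie^f=\{0\}$.

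More precisely, I would set up the following chain. Let $g=f$ be regular with $\alie^f$ of dimension $\chi(\alie)=:c$. Work on the regular open set $\alie^*_{\mathrm{reg}}$, on which $h\mapsto\alie^h$ defines a morphism $\kappa$ to the Grassmannian $\mathrm{Gr}_c(\alie)$. One shows $d_f\kappa(\xi)$, identified as a linear map $\alie^f\to\alie/\alie^f$, sends $X\in\alie^f$ to (the class of) the solution of a linear system whose obstruction is measured by $\Phi_\xi$ restricted to $\alie^f$; the upshot is that the image of $d_f\kappa$ is spanned, as $\xi$ ranges over $\alie^*$, by the ``directions'' $X\mapsto [Y,X]\bmod\alie^f$ for $Y\in\alie$, i.e. by the image of $\mathrm{ad}\colon\alie\to\mathrm{Hom}(\alie^f,\alie/\alie^f)$. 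The tangent space at $\kappa(f)$ to the $A$-orbit in $\mathrm{Gr}_c(\alie)$ is the image of $\alie\to\mathrm{Hom}(\alie^f,\alie/\alie^f)$, $Y\mapsto(X\mapsto[Y,X]\bmod\alie^f)$ — the \emph{same} map. Hence $d_f\kappa$ always has image inside the tangent space to the $A$-orbit, and so (since $A.f$ has the expected dimension by regularity, this being where part~(i) is used) $\kappa$ maps a neighbourhood of $f$ into the $A$-orbit of $\kappa(f)$ \emph{if and only if} $d_f\kappa$ is surjective onto that tangent space is not the issue — rather $f$ is stable iff the fibres of $\kappa$ near $f$ coincide with $A$-orbits, which by a dimension count using the constancy of $\dim\alie^h$ is equivalent to: the generic fibre of the action map $A\to A.f$ has dimension $\dim A-\dim(A.f)$ matching $\dim\alie^f$, and the transversal directions are not ``fake''. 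Carrying this out, the clean equivalence one lands on is: $f$ stable $\iff$ $\alie.f+\alie^f$-type transversality $\iff$ $[\alie,\alie^f]\cap\alie^f=\{0\}$. Since this is cited as proved in \cite{TY}, I would in the paper simply invoke that reference; the sketch above indicates the mechanism, and \textbf{the main obstacle} is making rigorous the identification of $d_f\kappa$ with the adjoint map and verifying that the resulting transversality condition is exactly $[\alie,\alie^f]\cap\alie^f=\{0\}$ rather than a one-sided inclusion.
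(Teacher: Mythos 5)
First, note that the paper does not actually prove this proposition: it is quoted verbatim from \cite{TY} with the sentence ``The following result is proved in \cite{TY}'', so there is no in-paper argument to compare against. Your treatment of part (i) is correct and is the standard argument (a stable $f$ has an open neighbourhood on which $\dim\alie^h$ is constant, and that neighbourhood must meet the dense open set of regular elements), and your decision to ultimately invoke \cite{TY} for part (ii) is exactly what the authors do.

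That said, the sketch you give for part (ii) contains a genuine error, not just a gap. You assert that the image of $d_f\kappa$ (the differential of $h\mapsto\alie^h$ into the Grassmannian) is spanned by the maps $X\mapsto [Y,X]\bmod\alie^f$ and therefore ``always has image inside the tangent space to the $A$-orbit.'' If that were true, every regular linear form would be stable, which is false: Proposition \ref{nostable} of this very paper exhibits a quotient $\qlie_{\Phi}$ with no stable linear form at all, even though regular forms always form a dense open set. The flaw is in computing $d_f\kappa$: to follow an element $X\in\alie^f$ along the curve $h(\epsilon)=f+\epsilon\xi$ one must solve $f([\dot X,Y])=-\xi([X,Y])$ for all $Y\in\alie$, and since the image of $Z\mapsto\Phi_f(Z,\cdot)$ is the annihilator of $\alie^f$, this linear system is solvable only when $\xi$ kills $[X,\alie^f]$. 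For generic $\xi$ this obstruction is nontrivial exactly when $[\alie,\alie^f]\cap\alie^f\neq\{0\}$ (note in particular that the condition forces $\alie^f$ to be abelian, since $[\alie^f,\alie^f]\subseteq[\alie,\alie^f]\cap\alie^f$). So the bracket condition is not a cosmetic reformulation of a transversality that always holds; it is precisely the solvability criterion your sketch glosses over. The cleaner route, and essentially the one in \cite{TY}, is: if $[\alie,\alie^f]\cap\alie^f=\{0\}$, choose a complement $\rlie$ of $\alie^f$ containing $[\alie,\alie^f]$ and use it to conjugate nearby stabilizers onto $\alie^f$; conversely, if the intersection is nonzero, produce a curve of regular forms along which the stabilizers cannot all be conjugate.
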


In this section, we return to the general case, that is $\glie$ is not necessarily of type $A$.
\begin{prop}\label{stable:prop2}
Let $\Phi$ be a subset of $\Delta^+$ such that $\glie^{\Phi}$ is an ad-nilpotent ideal of $\glie$. Let $\mathbf{h}=(\theta_1,\dots,\theta_s)$ be an H-sequence of $\Delta^+\setminus\Phi$ consisting of linear independent elements. Then there exists $f\in\qlie_{\Phi}^*$ which is stable and 
$\chi (\qlie_{\Phi}) = c(\mathbf{h})$.
\end{prop}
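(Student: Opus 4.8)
The plan is to exhibit an explicit stable form, and the natural candidate is $f_{\mathbf a}$ for a suitable $\mathbf a\in(\kset^*)^s$. Since $\mathbf h=(\theta_1,\dots,\theta_s)$ consists of linearly independent roots, $d(\mathbf h)=s=\ell(\mathbf h)$, so $c(\mathbf h)=\ell+s-2s=\ell-s$, and Theorem~\ref{th_index} already gives $\chi(\qlie_\Phi)\le\ell-s$. The whole point is therefore to show that for $\mathbf a$ in a suitable non-empty open subset of $(\kset^*)^s$, the form $f_{\mathbf a}$ is stable; by Proposition~\ref{stable}(i) this will force $f_{\mathbf a}$ to be regular, and combined with the upper bound we will read off $\chi(\qlie_\Phi)=\dim\qlie_\Phi-\rk\Phi_{f_{\mathbf a}}$. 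In fact when $d(\mathbf h)=s$ the computation in Lemma~\ref{lemma1}(ii) shows $\Theta_{f_{\mathbf a}}=\sum_{i=1}^s a_i\theta_i\wedge X^*_{\theta_i}$ has $\bigwedge^s\Theta_{f_{\mathbf a}}\ne0$, so from Lemma~\ref{lemma1}(iii) we get $\rk\Phi_{f_{\mathbf a}}=2(s+t)$ on the open set $U$, hence $\dim\qlie_\Phi^{f_{\mathbf a}}=\dim\hlie+\ell(\mathbf h)+2t-2(s+t)=\ell-s=c(\mathbf h)$ for $\mathbf a\in U$. So the \emph{only} thing left to prove is stability.

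The strategy for stability is to use the criterion Proposition~\ref{stable}(ii): $f_{\mathbf a}$ is stable if and only if $[\qlie_\Phi,\qlie_\Phi^{f_{\mathbf a}}]\cap\qlie_\Phi^{f_{\mathbf a}}=\{0\}$. First I would describe $\qlie_\Phi^{f_{\mathbf a}}$ concretely for generic $\mathbf a$. From Lemma~\ref{lemma1}(i), $\qlie_\Phi^{f_{\mathbf a}}$ contains the torus $\tlie=\{x\in\hlie;\theta_i(x)=0,\ i=1,\dots,s\}$, of dimension $\ell-s$; since this already matches $\dim\qlie_\Phi^{f_{\mathbf a}}=\ell-s$ on $U$, we get $\qlie_\Phi^{f_{\mathbf a}}=\tlie$ exactly. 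This is the key simplification afforded by the hypothesis that $\mathbf h$ has linearly independent elements: the stabilizer is a \emph{toral} subalgebra, consisting of semisimple elements of $\hlie$. Then $[\qlie_\Phi,\tlie]$ is spanned by root vectors $X_\alpha$ with $\alpha\in E$ and $\alpha|_{\tlie}\ne0$, i.e.\ it is contained in $\bigoplus_{\alpha\in E}\glie^\alpha$, which meets $\hlie\supset\tlie$ trivially. Hence $[\qlie_\Phi,\qlie_\Phi^{f_{\mathbf a}}]\cap\qlie_\Phi^{f_{\mathbf a}}\subset\bigl(\bigoplus_{\alpha\in E}\glie^\alpha\bigr)\cap\tlie=\{0\}$, and the criterion is satisfied.

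The main obstacle is really just making sure that the open set on which $\dim\qlie_\Phi^{f_{\mathbf a}}=c(\mathbf h)$ is non-empty, which is exactly Lemma~\ref{lemma1}(iii) specialized to the case $d(\mathbf h)=s$ (there the hypothesis that the $\theta_i$ are independent makes $\bigwedge^s\Theta_{f_{\mathbf a}}\ne0$ automatic for all $\mathbf a\in(\kset^*)^s$, and the only constraint on $\mathbf a$ comes from wanting $\bigwedge^t\Psi_{f_{\mathbf a}}\ne0$). Once that open set $U$ is in hand, everything above is a short verification: pick $\mathbf a\in U$, note $\qlie_\Phi^{f_{\mathbf a}}=\tlie$ is toral, apply Proposition~\ref{stable}(ii) via the triviality $\tlie\cap\bigoplus_{\alpha\in E}\glie^\alpha=\{0\}$ to conclude $f_{\mathbf a}$ is stable, and invoke Proposition~\ref{stable}(i) together with Theorem~\ref{th_index} to conclude $\chi(\qlie_\Phi)=\dim\qlie_\Phi^{f_{\mathbf a}}=c(\mathbf h)$. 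One minor point to check carefully is that $[\qlie_\Phi,\tlie]$ genuinely contains no element of $\tlie$: this is immediate since $[\hlie,\qlie_\Phi]\subset\bigoplus_{\alpha\in E}\glie^\alpha$ and this subspace is complementary to $\hlie$ in $\qlie_\Phi$, so the intersection with $\tlie\subset\hlie$ is zero.
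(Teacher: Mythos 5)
Your proof is correct and follows essentially the same route as the paper: for $\mathbf a$ in the open set of Lemma \ref{lemma1}(iii), the dimension count forces $\qlie_\Phi^{f_{\mathbf a}}$ to coincide with the toral subalgebra $\tlie$ of Lemma \ref{lemma1}(i), and stability then follows from Proposition \ref{stable}(ii). The only cosmetic difference is that you exhibit the complement explicitly as $\bigoplus_{\alpha\in E}\glie^\alpha$, whereas the paper invokes the general existence of a $\tlie$-stable complement for a commutative subalgebra of semisimple elements.
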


\begin{proof}
Our hypothesis implies that $\ell(\mathbf{h})=d(\mathbf{h})$. So
$$
c(\mathbf{h})=\ell-d(\mathbf{h}).
$$
Let $U$ be a non-empty open subset of $(\kset^*)^s$ verifying part (iii) of Lemma \ref{lemma1}. 
By Theorem \ref{th_index} and Lemma \ref{lemma1}, if $\mathbf{a}\in U$, we have 
$\dim \qlie_{\Phi}^{f_{\mathbf{a}}}=c(\mathbf{h})$. It follows from Lemma \ref{lemma1} that $\qlie_{\Phi}^{f_{\mathbf{a}}}$ is a commutative Lie subalgebra of $\qlie_{\Phi}$ consisting of semi-simple elements. Therefore, there exists a vector subspace $\rlie$ of $\qlie_{\Phi}$ such that $\qlie_{\Phi}=\qlie_{\Phi}^{f_{\mathbf{a}}}\oplus \rlie$ and $[\qlie_{\Phi}^{f_{\mathbf{a}}},\rlie]\subset\rlie$. So 
$[\qlie_{\Phi},\qlie_{\Phi}^{f_{\mathbf{a}}}]\subset\rlie$ and the result follows by 
Theorem \ref{stable}.
\end{proof}

We shall now show that $\qlie_{\Phi}$ does not necessarily contain a stable linear form in general.

Let ${\mathcal{M}}_{6,6}(\kset )$ be the set of $6\times6$ matrices and let 
$\{E_{i,j},1\leqslant i,j\leqslant 6\}$ be its canonical basis. Let $\glie$ be the subset of 
${\mathcal{M}}_{6,6}(\kset )$ whose trace is equal to zero. 
We fix $\hlie$ the set of diagonal matrices of $\glie$ and $\blie$ the set of upper triangular 
matrices of $\glie$.

Therefore, we can choose $X_{\alpha_i+\dots+\alpha_j}=E_{i,j+1}$, for 
$1\leqslant i\leqslant j\leqslant 5$. Set 
$$
\Phi=\{\alpha\in\Delta^+;\alpha\geqslant \alpha_1+\alpha_2+\alpha_3 \mbox{ or }\alpha\geqslant\alpha_3+\alpha_4+\alpha_5\}.
$$
The Panov H-sequence of $\Delta^+\setminus\Phi$ is 
$$
\mathbf{h}=(\alpha_1+\alpha_2,\alpha_2+\alpha_3+\alpha_4,\alpha_3+\alpha_4,\alpha_3,\alpha_4+\alpha_5,\alpha_5).
$$ 
We have $\ell(\mathbf{h})=6$, $d(\mathbf{h})=5$ and $c(\mathbf{h})=1$.

By definition, $\ilie=\glie^{\Phi}$ is an ad-nilpotent ideal of $\glie$. 

\begin{prop}\label{nostable}
The Lie algebra $\qlie_{\Phi}^*$ does not possess any stable linear form.
\end{prop}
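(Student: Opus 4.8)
The plan is to use the characterization of stable linear forms in Proposition~\ref{stable} (ii): a linear form $f$ on $\qlie_{\Phi}$ is stable if and only if $[\qlie_{\Phi},\qlie_{\Phi}^f]\cap\qlie_{\Phi}^f=\{0\}$. Since by Theorem~\ref{th_index} and Proposition~\ref{prop01} we know $\chi(\qlie_{\Phi})=c(\mathbf{h})=1$, a stable $f$ would have to be regular, hence satisfy $\dim\qlie_{\Phi}^f=1$. So $\qlie_{\Phi}^f$ would be a one-dimensional subspace $\kset X$ with $X\in\qlie_{\Phi}^f$, and stability would amount to the condition $X\notin[\qlie_{\Phi},\kset X]$, i.e. $[\qlie_{\Phi},X]\not\ni X$ unless $X=0$ — equivalently, $X$ is not an eigenvector of $\mathrm{ad}_Y$ with nonzero eigenvalue for any $Y$, and more precisely that $\kset X\cap[\qlie_{\Phi},X]=\{0\}$. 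The strategy is to show this fails for every regular $f$.

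First I would set up coordinates explicitly. Using the matrix realization, $\qlie_{\Phi}=\blie/\ilie$ is spanned (modulo $\ilie$) by $\hlie$ together with the root vectors $E_{i,j+1}$ for $\alpha_{i,j}\in E=\Delta^+\setminus\Phi$; one writes down $E$ concretely from the definition of $\Phi$ (the roots $\alpha_{i,j}$ \emph{not} above $\alpha_{1,3}$ or $\alpha_{3,5}$). For a generic $f$, I would compute $\qlie_{\Phi}^f$: since $\dim\qlie_{\Phi}^f=1$ on a dense open set, and $\qlie_{\Phi}^f$ always contains the weight-zero contributions controlled by $\tlie$ of Lemma~\ref{lemma1}(i) — but here $\ell(\mathbf{h})=d(\mathbf{h})$ so $\tlie=0$ — the annihilator is spanned by a single vector $X_f$ which I would compute as a function of $f$ (it lies in $\qlie_{\Phi}$ and has some $\hlie$-component and some root-space components). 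The key structural fact to extract is the $\hlie$-weight structure of $X_f$: because $c(\mathbf{h})=\ell-d(\mathbf{h})=1$ is odd and the Panov H-sequence spans a hyperplane $D(\mathbf{h})$ in $\hlie^*$, there is essentially one ``missing direction,'' and I expect $X_f$ to necessarily have a nonzero component along some root vector $X_\gamma$ with $\gamma\ne 0$, together with (possibly) an $\hlie$-part.

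Then I would show the stability condition fails. The cleanest route: pick $H\in\hlie$ and compute $[H,X_f]$. If $X_f$ were a sum of root vectors all of the same nonzero weight $\gamma$ plus possibly terms, then $[H,X_f]$ is again proportional to (parts of) $X_f$, producing an element of $[\qlie_{\Phi},\qlie_{\Phi}^f]\cap\qlie_{\Phi}^f$. More carefully, I would argue that for a regular $f$ the vector $X_f$ decomposes according to $\hlie$-weights, and since $\qlie_{\Phi}^f$ is $\hlie^f$-stable and one-dimensional, $X_f$ must be a single $\hlie^f$-weight vector; I then exhibit $H\in\hlie^f$ (note $\hlie^f\ne 0$ since $\chi\geqslant 1$ forces $\hlie$ to act with a nontrivial kernel on the relevant piece, or directly because the $\hlie$-part of $X_f$ together with its $\hlie$-eigenvalue data is constrained) with $[H,X_f]=\lambda X_f$, $\lambda\ne 0$. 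That gives a nonzero element of $[\qlie_{\Phi},\qlie_{\Phi}^f]\cap\qlie_{\Phi}^f$, so by Proposition~\ref{stable}(ii) $f$ is not stable. Since this holds for all regular $f$ and stable forms must be regular, $\qlie_{\Phi}^*$ has no stable linear form.

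\textbf{The main obstacle} I anticipate is the explicit identification of $\qlie_{\Phi}^{f}$ for generic $f$ — the linear-algebra computation of the $1$-dimensional kernel of the $13\times 13$ (or so) alternating form $\Phi_f$ on $\qlie_{\Phi}$, and verifying that its spanning vector genuinely has a nonzero root-space component of nonzero weight that is preserved by a suitable $H\in\hlie$. It is conceivable that for some regular $f$ the annihilator $\qlie_{\Phi}^f$ lands in $\hlie$ (weight zero), in which case $[\hlie,\qlie_{\Phi}^f]=0$ and one would instead need to check $[\glie^E,X_f]\cap\kset X_f$; handling this case requires showing $X_f\in\hlie$ cannot be orthogonal (under $f$) to all brackets $[X_\alpha,X_{-\alpha+\theta_k}]$-type combinations, a more delicate bracket computation. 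I would organize the argument so that the generic $X_f$ is shown to have a prescribed nonzero weight, reducing everything to a single transparent bracket identity $[H,X_f]=\lambda X_f$.
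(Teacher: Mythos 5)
Your overall framework (stable $\Rightarrow$ regular, then violate the criterion $[\qlie_{\Phi},\qlie_{\Phi}^f]\cap\qlie_{\Phi}^f=\{0\}$ of Proposition~\ref{stable}(ii) by exhibiting $H\in\hlie$ with $[H,X_f]=\lambda X_f$, $\lambda\ne0$) matches the paper's, but there is a genuine gap in how you propose to cover \emph{all} regular forms. You plan to compute the one-dimensional annihilator $\qlie_{\Phi}^{f}=\kset X_f$ for an \emph{arbitrary} regular $f$ and to argue that $X_f$ is forced to be a single $\hlie^f$-weight vector preserved by some $H$. This is exactly the step you flag as the ``main obstacle,'' and it is not resolved: for a general regular $f$ (with nonzero components on $\hlie^*$ and on many root spaces) there is no reason for $\qlie_{\Phi}^{f}$ to be $\hlie$-stable, nor for its generator to be a weight vector, and the invocation of ``$H\in\hlie^f$'' is incoherent when $\dim\qlie_{\Phi}^f=1$ (either $\qlie_{\Phi}^f\subset\hlie$ or $\hlie\cap\qlie_{\Phi}^f=0$). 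So the proposed weight-vector argument cannot be run for an arbitrary regular form, and verifying non-stability at a single form, or even on a non-dense family, does not rule out stable forms elsewhere.

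The idea your proposal is missing is the orbit-sweeping reduction that the paper uses. The paper only computes $\qlie_{\Phi}^{f}$ for the explicit one-parameter family $f_{\boldsymbol{\lambda}_t}=\sum_{i=1}^5 X_{\theta_i}^*+tX_{\theta_6}^*$, finds $\qlie_{\Phi}^{f_{\boldsymbol{\lambda}_t}}=\kset Z_t$ and an $H$ with $[H,Z_t]=Z_t$, so none of these forms is stable. It then proves (via a matrix computation) that $f_{\boldsymbol{\lambda}_s}$ and $f_{\boldsymbol{\lambda}_t}$ are not $Q$-conjugate for $s\ne t$, so the map $\Psi\colon Q\times\kset^*\to\qlie_{\Phi}^*$, $(x,t)\mapsto x.f_{\boldsymbol{\lambda}_t}$, has fibers of dimension $\dim\mathrm{Stab}_Q(f_{\boldsymbol{\lambda}_t})=\chi(\qlie_{\Phi})=1$ and hence image of dimension $\dim Q=\dim\qlie_{\Phi}^*$; thus $Q.\{f_{\boldsymbol{\lambda}_t}\}$ contains a non-empty open subset of $\qlie_{\Phi}^*$ consisting entirely of non-stable forms (stability being $Q$-invariant). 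Since the stable locus is open, it must be empty. Without this reduction --- the non-conjugacy of the $f_{\boldsymbol{\lambda}_t}$, the dimension count for $\mathrm{Im}\,\Psi$, and the openness of the stable locus --- your argument does not close.
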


\begin{proof}
By Theorem \ref{th_indice} or Proposition \ref{prop01}, 
we have $\chi(\qlie_{\Phi})=1$. Let $t\in\kset^*$ and 
$\boldsymbol{\lambda}_t=(1,\dots,1,t)\in\kset^6$. Set 
$f_{\boldsymbol{\lambda}_t}=\sum_{i=1}^5 X_{\theta_i}+tX_{\theta_6}$ and
$$
Z_t=X_{\alpha_1}-X_{\alpha_3}+\frac{1}{t}X_{\alpha_5}+(1+\frac{1}{t})X_{\alpha_2+\alpha_3}+X_{\alpha_3+\alpha_4}-X_{\alpha_4+\alpha_5}.
$$
A simple calculation gives $\qlie_{\Phi}^{f_{\boldsymbol{\lambda}_t}}=\mathrm{Vect}(Z_t).$

Let $H\in\hlie$ be such that $\alpha_1(H)=\alpha_3(H)=\alpha_5(H)=1$ and $\alpha_2(H)=\alpha_4(H)=0$. Then $[H,Z_t]=Z_t$, so $[\qlie_{\Phi},\qlie_{\Phi}^{f_{\boldsymbol{\lambda}_t}}]\cap
\qlie_{\Phi}^{f_{\boldsymbol{\lambda}_t}}\not=\{0\}$. 
By Theorem \ref{stable}, $f_{\boldsymbol{\lambda}_t}$ is not stable. 

\medskip
Denote by $Q$ the algebraic adjoint group of $\qlie_{\Phi}$. Then $Q$ can be identified
with the quotient of the set of invertible upper triangular matrices by a closed normal subgroup. 
Let $s,t\in\kset^*$. Assume that $\qlie_{\Phi}^{f_{\boldsymbol{\lambda}_t}}$ and $\qlie_{\Phi}^{f_{\boldsymbol{\lambda}_s}}$ are $Q$-conjugate, then there exist $\lambda\in\kset^*$ and 
an invertible triangular matrix $P$ such that $PZ_sP^{-1}-\lambda Z_t\in \ilie$. By the definition of $\ilie$, for any element $L\in\ilie$ and for any upper triangular matrix $R$, we have $LR\in\ilie$. It follows that $PZ_s-\lambda Z_tP\in\ilie$. By a direct computation, we obtain that $t=s$.

Recall that for $f,g\in\qlie_{\Phi}^*$, if $f$ and $g$ are $Q$-conjugate, then $\qlie_{\Phi}^f$ and 
$\qlie_{\Phi}^g$ are also $Q$-conjugate. We define 
$$
\begin{array}{ccclcr}
\Psi &: &Q\times \kset^* &\rightarrow& \qlie_{\Phi}^* \\
&&(x,t)&\mapsto &x.f_{\boldsymbol{\lambda}_t} \\
\end{array}
$$
By the above consideration, we deduce that 
$$
\Psi^{-1}(f_{\boldsymbol{\lambda}_t})=\{(x,s)\in Q\times \kset^*; x.f_{\boldsymbol{\lambda}_s}=f_{\boldsymbol{\lambda}_t}\}=\mathrm{Stab}_Q(f_{\boldsymbol{\lambda}_t}).
$$ 

Since $\dim \mathrm{Im}\Psi=\dim(Q\times\kset^*)-\dim \Psi^{-1}(f_{\boldsymbol{\lambda}_t})$, 
we have $\dim \mathrm{Im}\Psi=\dim Q$. Therefore $Q.\kset^*$ contains a non-empty open subset 
of $\qlie_{\Phi}^*$ which does not contain any stable linear form.

Since the set of stable linear forms of $\qlie_{\Phi}$ is an open subset of $\qlie_{\Phi}^*$, the result follows immediately.
\end{proof}

%----------------------
\section{Remarks on the exactness of the upper bounds}

Assume that $\glie$ is of type $C_7$. Using the numbering of simple roots in \cite{TYL}, set
$$
\begin{array}{ll}
\beta_1=\alpha_1+\alpha_2+\alpha_3+\alpha_4+\alpha_5, \\
\beta_2=\alpha_2+\alpha_3+\alpha_4+2\alpha_5+2\alpha_6+\alpha_7, \\
\beta_3=2\alpha_4+2\alpha_5+2\alpha_6+\alpha_7,\\
\Phi=\{\alpha\in\Delta^+; \alpha \geqslant \beta_i,\mbox{ for some } i \mbox{ such that } 1\leqslant i\leqslant 3\}.
\end{array}
$$
We check by hand that the minimal length of an H-sequence associated to $\Delta^+\setminus\Phi$ 
is $8$. 
For example, the following H-sequence  $\mathbf{h}=(\theta_1,\dots ,\theta_s)$  is of length $8$ : 
$$
\begin{array}{ll}
\theta_1=\alpha_1+\alpha_2+\alpha_3+\alpha_4, &\theta_5=\alpha_4+2\alpha_5+2\alpha_6+\alpha_7,\\
\theta_2=\alpha_2+\alpha_3+\alpha_4+\alpha_5+2\alpha_6+\alpha_7,& \theta_6=2\alpha_5+2\alpha_6+\alpha_7,\\
\theta_3=\alpha_3+\alpha_4+\alpha_5+\alpha_6+\alpha_7,& \theta_7=\alpha_5+\alpha_6,\\
\theta_4=\alpha_2+\alpha_3+\alpha_4+\alpha_5, &\theta_8=\alpha_5.\\
\end{array}
$$
and $d(\mathbf{h})=7$.

By Theorem \ref{th_index}, we have 
$$
\chi(\qlie_{\Phi})\leqslant\ell+\ell(\mathbf{h})-2d(\mathbf{h})=1, \mbox{ and  } \chi(\mlie_{\Phi})\leqslant \ell(\mathbf{h})=8.
$$
So $\chi(\qlie_{\Phi})=1$ by Proposition \ref{prop01}. But by considering an arbitrary linear form, we found that $\chi(\mlie_{\Phi})\leqslant 6$. Thus the upper bound for the index of $\mlie_{\Phi}$ is not always exact when $\glie$ is not of type $A$.

\medskip
We did some computations using \textsc{Gap4} on arbitrary linear forms when $\glie$ is of rank less than or equal to $6$, and we have not found an example where the upper bound for $\chi(\qlie_{\Phi})$ is not exact. This leads us to formulate the following conjecture :

\begin{conjecture}
Let $\Phi\subset\Delta^+$ such that $\glie^{\Phi}$ is an ad-nilpotent ideal of $\glie$. There exists an H-sequence  $\mathbf{h}$ of $\Delta^+\setminus\Phi$ such that
$$
\chi(\qlie_{\Phi})=\ell+\ell(\mathbf{h})-2d(\mathbf{h}).
$$
\end{conjecture}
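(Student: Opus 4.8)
\medskip
\noindent\textbf{A possible approach.}
The natural line of attack is to push the type-$A$ argument of Section~4 as far as it will go and to pinpoint the one ingredient of Panov's work that has no type-independent substitute yet. Two reductions are already available. If $\Delta^+\setminus\Phi$ admits an H-sequence $\mathbf h$ of linearly independent roots, then Proposition~\ref{stable:prop2} gives $\chi(\qlie_{\Phi})=c(\mathbf h)$, whence $c(\mathbf h)=\chi(\qlie_{\Phi})\leqslant c(\mathbf h')$ for every $\mathbf h'\in\calh$ by Theorem~\ref{th_index}, so the conjecture holds with this $\mathbf h$. If $\min_{\mathbf h\in\calh}c(\mathbf h)\in\{0,1\}$, Proposition~\ref{prop01} applies. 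So one may assume that every H-sequence of $\Delta^+\setminus\Phi$ has $\ell(\mathbf h)>d(\mathbf h)$ and that the minimum of $c$ over $\calh$ is at least $2$; in particular the parity trick of Proposition~\ref{prop01} is no longer enough.

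Fix a total order $\prec$ on $\Delta^+$ refining $\leqslant$ and let $\mathbf h=(\theta_1,\dots,\theta_s)$ be the decreasing H-sequence of $E=\Delta^+\setminus\Phi$ (the analogue of Panov's H-sequence), with its filtration $\nlie_1\supset\dots\supset\nlie_s$, $\mlie_j=\nlie_j/\ilie$. As in Section~4, the whole statement would follow from the single inequality
$$
\chi(\qlie_{\Phi},\mlie_{\Phi})\ \geqslant\ \ell(\mathbf h)-d(\mathbf h)
$$
together with the transfer step of Theorem~\ref{th_indice}. Indeed, this inequality bounds $\rk(M')$ from above in the notation of Theorem~\ref{th_indice_rep}; via $\rk(M)\leqslant\rk(A)+\rk(M')$ and $\rk(A)=d(\mathbf h)$ it forces $\dim\qlie_{\Phi}^{f_{\mathbf a}}\geqslant c(\mathbf h)$ for $\mathbf a$ in the open set of Lemma~\ref{lemma1}(iii), which combined with the reverse bound of Theorem~\ref{th_index} gives $\dim\qlie_{\Phi}^{f_{\mathbf a}}=c(\mathbf h)$; one then uses the $M$-equivariance of the restriction $\qlie_{\Phi}^*\to\mlie_{\Phi}^*$ and the fact that all brackets of $\qlie_{\Phi}$ land in $\mlie_{\Phi}$ to transport a \emph{regular} linear form of $\qlie_{\Phi}^*$ onto one with the same bilinear form as a suitable $f_{\mathbf a}$, concluding $\chi(\qlie_{\Phi})=\dim\qlie_{\Phi}^{f_{\mathbf a}}=c(\mathbf h)$.

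To obtain the displayed inequality I would try to rebuild Panov's tower of Poisson reductions. Using Lemma~\ref{prop_sequence} one checks that each $\mlie_j$ is a Lie subalgebra and that $\mathrm{ad}(X_{\theta_j})$ is suitably triangular along $\Gamma_j$, and then hopes to produce $\hlie$-equivariant Poisson embeddings $S(\mlie_j)\hookrightarrow S(\mlie_{j-1})_{X_{\theta_j}}$; composing them and applying to $X_{\theta_j}$ gives semi-invariants $f_1,\dots,f_s\in\kset(\mlie_{\Phi}^*)^{\mlie_{\Phi}}$ with $\mathrm{wt}(f_j)=\theta_j$. If these can be shown algebraically independent, then for $\cali\subset\{1,\dots,s\}$ with $\{\theta_i:i\in\cali\}$ a basis of $D(\mathbf h)$ the weight-zero functions $g_j=\left(\prod_{i\in\cali}f_i^{\lambda_i}\right)f_j^{-\lambda_j}$, $j\notin\cali$, lie in $\kset(\mlie_{\Phi}^*)^{\qlie_{\Phi}}$ and are algebraically independent, which is exactly $\chi(\qlie_{\Phi},\mlie_{\Phi})\geqslant\ell(\mathbf h)-d(\mathbf h)$. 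The transfer step would likewise go through verbatim once one produces polynomial $\mlie_{\Phi}$-semi-invariants $z_1,\dots,z_s$ of the triangular shape $z_i=X_{\theta_i}P+R$ of \eqref{eq2} relative to $\prec$, which forces $\boldsymbol\lambda\mapsto f_{\boldsymbol\lambda}$ to be injective on $M$-orbits meeting the open set $\Omega'$.

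The serious obstacle is precisely the construction of this tower (and, in the transfer step, the identity $\dim\mlie_{\Phi}^{f_{\boldsymbol\lambda}}=\ell(\mathbf h)$, i.e.\ essentially $\chi(\mlie_{\Phi})=\ell(\mathbf h)$, both of which are type-$A$ phenomena), and it is not a mere technicality: in the $C_7$ example above the minimal length of an H-sequence is $8$ while $\chi(\mlie_{\Phi})\leqslant 6$, so $\chi(\mlie_{\Phi})<\ell(\mathbf h)$ for \emph{every} H-sequence, the transfer argument as written cannot apply for any choice of $\mathbf h$, and the chain of Poisson embeddings must break down somewhere. A genuine proof therefore seems to require something new: either (a) working directly on $\qlie_{\Phi}^*$ and exhibiting $c(\mathbf h)$ algebraically independent $Q$-invariant rational functions there, bypassing $\mlie_{\Phi}$ entirely; or (b) a uniform upper bound $\rk\Phi_f\leqslant\dim\qlie_{\Phi}-c(\mathbf h)$ valid for \emph{all} $f\in\qlie_{\Phi}^*$, equivalently $\chi(\qlie_{\Phi})\geqslant c(\mathbf h)$ for a minimising $\mathbf h$, proved through a structural description of $\qlie_{\Phi}^{f}$; or, failing a uniform argument, (c) a type-by-type verification outside type $A$. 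I would start with (b) for a minimiser $\mathbf h$: both special cases already settled rest on rank estimates, and the $C_7$ computation suggests that whatever governs exactness should again be visible at the level of the alternating form $\Phi_f$.
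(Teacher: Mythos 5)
The statement you were asked to prove is stated in the paper as a \emph{conjecture}: the authors give no proof, only numerical evidence from \textsc{Gap4} computations in rank at most $6$. So there is no proof in the paper to compare yours against, and your submission is, by your own account, not a proof either --- it is a survey of partial results and obstacles. Judged on those terms it is accurate. The two reductions you isolate are exactly the cases the paper settles: Proposition~\ref{stable:prop2} when some H-sequence consists of linearly independent roots (your deduction that the conjecture then holds with that $\mathbf h$, via the sandwich $c(\mathbf h)=\chi(\qlie_{\Phi})\leqslant\min_{\mathbf h'}c(\mathbf h')\leqslant c(\mathbf h)$, is correct), and Proposition~\ref{prop01} when the minimum of $c$ is $0$ or $1$. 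Your reconstruction of how Theorem~\ref{th_indice_rep} and Theorem~\ref{th_indice} would extend --- reduce everything to the single inequality $\chi(\qlie_{\Phi},\mlie_{\Phi})\geqslant\ell(\mathbf h)-d(\mathbf h)$, then transfer via the rank estimate $\rk(M)\leqslant\rk(A)+\rk(M')$ and the $M$-equivariant projection --- is faithful to the type-$A$ argument.

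The gap you name is the real one, and it is why the statement is a conjecture rather than a theorem: every step of the type-$A$ proof that goes beyond Lemma~\ref{lemma1} rests on Panov's explicit tower of Poisson embeddings $S(\mlie_j)\hookrightarrow S(\mlie_{j-1})_{X_{\theta_j}}$ and on the resulting identities $\chi(\mlie_{\Phi})=\ell(\mathbf h)$ and $\kset(\mlie_{\Phi}^*)^{M}=\kset(z_1,\dots,z_s)$, none of which has a type-independent substitute. Your observation that the $C_7$ example kills any verbatim transfer --- since $\chi(\mlie_{\Phi})\leqslant 6<8\leqslant\ell(\mathbf h)$ for every H-sequence there, so one cannot hope for $s$ independent semi-invariants on $\mlie_{\Phi}^*$ indexed by the $\theta_j$ --- is the correct diagnosis and matches the paper's own remark that the bound for $\chi(\mlie_{\Phi})$ fails outside type $A$ while the bound for $\chi(\qlie_{\Phi})$ appears to survive. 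In short: your write-up contains no error, but it proves the conjecture only in the two cases the paper already covers; the general case remains open, in your text as in the paper.
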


\end{document}